\newcommand\blfootnote[1]{%
  \begingroup
  \renewcommand\thefootnote{}\footnote{#1}%
  \addtocounter{footnote}{-1}%
  \endgroup
}
\newcounter{foo}
\theoremstyle{plain}
\newtheorem{thm}[foo]{Theorem}
\newtheorem{prop}[foo]{Proposition}
\newtheorem{lem}[foo]{Lemma}
\newtheorem{cor}[foo]{Corollary}
\theoremstyle{definition}
\newtheorem{defn}[foo]{Definition}
\newtheorem{exam}[foo]{Example}
\theoremstyle{remark}
\newtheorem{rem}[foo]{Remark}
\newcommand{\p}{\partial}
\newcommand{\bp}{\overline{\partial}}
\newcommand{\Ric}{\text{Ric}}
\newcommand{\Rm}{\text{Rm}}
\newcommand{\RR}{\mathbb{R}}
\newcommand{\CC}{\mathbb{C}}
\newcommand{\PP}{\mathbb{P}}
\numberwithin{foo}{section}
\numberwithin{equation}{section}
\title{Curvature Estimates for the Continuity Method}
\begin{document}
\date{\today}
\author{Hosea Wondo}

\maketitle

\begin{abstract}
    We obtain curvature estimates for long time solutions of the continuity method on compact K\"ahler manifolds with semi-ample canonical line bundles. In this setting, initiated in \cite{LT16} and \cite{R08}, we adapt arguments from \cite{FZ20} for the K\"ahler-Ricci flow to this setup. As an application, we derive curvature bounds for general metrics on product manifolds. 
\end{abstract}

\section{Introduction}
\blfootnote{2020 Mathematics Subject Classification. Primary 53C55; Secondary 32Q15.}
Let $\omega(t)$ be a smooth family of K\"ahler metrics on a K\"ahler Manifold $(X,\omega_0)$, parameterized by $t \geqslant 0$, such that
\begin{equation}\label{cont}
\begin{cases}
        (1+t)\omega(t) = \omega_0 - t \Ric (\omega),  \\
        \omega(0) = \omega_0.
\end{cases}
\end{equation}

This equation, known as the continuity method, first appeared in \cite{R08} by Rubinstein, and was later reintroduced by La Nave and Tian in \cite{LT16} as an alternative to the K\"ahler-Ricci flow for carrying out the analytic minimal model program \cites{ST12,ST17,ST06,T19}. The main advantage the continuity method has over the flow is the immediate lower bound for the Ricci curvature along the deformation. This allows the application of geometric comparison techniques. For the K\"ahler-Ricci flow, a Ricci bound is conjectured in general, but this problem is still open (for some recent progress, see \cites{FZ20}). 

As with the K\"ahler-Ricci flow, the continuity method should deform any metric on a numerically effective K\"ahler manifold to its canonical metric, identified to be one of K\"ahler-Einstien type. At the level of Cohomology, \eqref{cont} deforms the initial class $[\omega_0]$ to the manifold's first Chern class $-c_1(X)$. Motivated by the abundance conjecture, we assume that canonical line bundle, $K_X$, is semi-ample. Under this assumption,  $K_X$ generates a fiber space map 
\begin{equation}\label{fmap}
    f : X \rightarrow X_{can} \subset \CC \PP^N. 
\end{equation}
This map may contain singular fibers, thus we denote 
$$\Sigma_s := \{s \in \Sigma \mid X_s:= f^{-1}(s) \text{ is a singular fiber}\}$$ to be the singular set resulting from $f$. It turns out that the convergence of the family of metrics satisfying the continuity method depends on the kodaria dimension of $X$, kod$(X)$. Geometrically, this turns out to be the dimension of the possibly singular manifold $\Sigma$ away from the singular set $\Sigma_s$. The Kodaria dimension cannot exceed the complex dimension of $X$, $n$. We can therefore distinguish the following three cases depending on kod$(X)$: 
\begin{enumerate}
    \item We say that $X$ is of \textit{General Type} if kod$(X) = n$. It was shown in \cites{LT16} (after rescaling appropriately) that the metric weakly converges to a K\"ahler-Einstien metric as $t \rightarrow \infty$. 
    \item If kod$(X) = 0$, then $X$ is a \textit{Calabi Yau manifold}. For the K\"ahler-Ricci flow, classical result by Cao in \cites{C86} can be carried over to the continuity method to show smooth deformation of $(1+t) \omega(t)$ to the unique Ricci flat metric in the class $[\omega_0]$ as $t \rightarrow \infty$.
    \item If  $0 <$ kod$(X) < n$, then we are in the \textit{Volume Collapsed Case}. For the K\"ahler-Ricci flow, the volume collapsed case has been extensively studied \cites{STZ19, TZ21,TZ15, yZ19, Z20,CL21,GTZ20,FZ20,GTZ13,JS21,yZ19,TWX18}. When $X$ admits a Fano fibration, Gromov-Hausdorff convergence for \eqref{cont} is obtained in \cite{ZZ20}. When $X$ is a minimal elliptic surface, similar convergence was obtained in \cite{ZZ19}. In the more general setting where $X$ is a Hermitian manifold admitting an elliptic bundle over a Rienmann surface, Gromov-Hausdorff convergence and curvature estimates are established in \cite{SW20}. 
\end{enumerate}

 It was shown in \cite{Z20} that the singularity type of the K\"ahler-Ricci flow does not depend on the choice of the initial metric, from which Y. Zhang gives an elegant proof of some classification results by Tossati-Zhang \cite{TZ15}. The authors Fong and Y. Zhang in \cite{FZ20} then obtained an improved result which localizes the estimate and allows for degenerate metric and curvature bounds. The results of this paper are inspired by results from these two papers. 

\begin{thm}\label{thm1}
    Suppose that we are in the setup given by \eqref{fmap}. Assume there exists  $\tau: [0,\infty) \rightarrow [1,\infty)$ and an initial metric $\widetilde{\omega}_0$ whose solution $\widetilde{\omega}(t)$ to \eqref{cont} satisfies 
        \begin{equation}\label{a1}
        \sup_{X}|Rm(\widetilde{\omega}(t))|_{\widetilde{\omega}(t)} \leqslant \tau(t).
    \end{equation}
Then the solution $\omega(t)$ to \eqref{cont} starting from any other initial metric $\omega(0) = \omega_0$ satisfies 
    \begin{equation}\label{meq}
         C^{-1}e^{- C\tau(t)} \widetilde{\omega}(t) \leqslant \omega(t) \leqslant C e^{C \tau(t)} \widetilde{\omega}(t),
    \end{equation}
    for all $t \geqslant 0$ and for some $C>0$ uniform of $t$. 
\end{thm}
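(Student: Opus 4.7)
My approach is to reduce \eqref{cont} to a pair of elliptic complex Monge--Amp\`ere equations sharing a common reference, establish a uniform $L^\infty$ bound on the potential difference, apply a Chern--Lu (Schwarz-lemma) second-order estimate to bound $\text{tr}_{\widetilde\omega}\omega$, and finally extract the lower bound via an eigenvalue argument.

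Since $K_X$ is semi-ample, I would fix a smooth semi-positive form $\chi \in c_1(K_X)$ (e.g.\ $\chi = f^*\omega_{FS}$) and a smooth volume form $\Omega$ with $-\sqrt{-1}\partial\bar\partial\log\Omega = \chi$. Writing $\omega = \chi_t + \sqrt{-1}\partial\bar\partial\varphi$ and $\widetilde\omega = \widetilde\chi_t + \sqrt{-1}\partial\bar\partial\widetilde\varphi$ with $\chi_t = (\omega_0 + t\chi)/(1+t)$ and $\widetilde\chi_t = (\widetilde\omega_0 + t\chi)/(1+t)$, the continuity method \eqref{cont} becomes
\[
\omega^n = e^{\frac{1+t}{t}\varphi}\Omega, \qquad \widetilde\omega^n = e^{\frac{1+t}{t}\widetilde\varphi}\Omega.
\]
Setting $\psi = \varphi - \widetilde\varphi$ and $\eta = (\omega_0 - \widetilde\omega_0)/(1+t)$ yields $\omega = \widetilde\omega + \eta + \sqrt{-1}\partial\bar\partial\psi$ and $\omega^n/\widetilde\omega^n = e^{(1+t)\psi/t}$. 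The standard $L^\infty$ estimate for the continuity method on manifolds with semi-ample canonical bundle (Kolodziej / Song--Tian type, as in \cite{LT16}) applied to each of $\varphi, \widetilde\varphi$ gives $\|\varphi\|_{L^\infty}, \|\widetilde\varphi\|_{L^\infty} \leq C$ uniformly in $t \geq 1$, and hence $\|\psi\|_{L^\infty} \leq C$.

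For the upper bound in \eqref{meq}, I apply the Chern--Lu inequality. The hypothesis $|\Rm(\widetilde\omega)|_{\widetilde\omega} \leq \tau(t)$ bounds the bisectional curvature of $\widetilde\omega$ by $C\tau(t)$, while \eqref{cont} automatically gives $\Ric(\omega) \geq -\frac{1+t}{t}\omega \geq -2\omega$ for $t \geq 1$, so
\[
\Delta_\omega \log \text{tr}_{\widetilde\omega}\omega \geq -C\tau(t)\text{tr}_\omega\widetilde\omega - 2.
\]
Consider $H = \log \text{tr}_{\widetilde\omega}\omega - A\psi$ with $A$ linear in $\tau(t)$. Using $\Delta_\omega \psi = n - \text{tr}_\omega\widetilde\omega - \text{tr}_\omega\eta$, the maximum principle on $H$ produces a pointwise bound on $\text{tr}_\omega\widetilde\omega$ at the maximum, which the Maclaurin-type inequality $\text{tr}_{\widetilde\omega}\omega \leq (\omega^n/\widetilde\omega^n)(\text{tr}_\omega\widetilde\omega)^{n-1}/n^{n-2}$ converts into a bound on $\text{tr}_{\widetilde\omega}\omega$ at that same point. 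Combined with $\|\psi\|_{L^\infty} \leq C$, this propagates to $\log\text{tr}_{\widetilde\omega}\omega \leq C\tau(t)$ globally, giving $\omega \leq Ce^{C\tau(t)}\widetilde\omega$. For the lower bound, let $\lambda_1, \ldots, \lambda_n$ be the eigenvalues of $\omega$ with respect to $\widetilde\omega$: then $c \leq \prod \lambda_i \leq C$ follows from the bound on $\psi$, while $\sum \lambda_i \leq Ce^{C\tau(t)}$ is the upper estimate, so $\lambda_i \geq \prod_j \lambda_j/(\sum_{j \neq i}\lambda_j)^{n-1} \geq ce^{-(n-1)C\tau(t)}$, proving the lower bound in \eqref{meq}.

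The main obstacle is the second-order estimate: the term $\text{tr}_\omega \eta$ in $\Delta_\omega \psi$ must be controlled, but $\widetilde\omega$ may collapse in the Volume Collapsed case, so $|\eta|_{\widetilde\omega}$ is not a priori bounded independently of $\tau(t)$. The resolution is to apply \eqref{cont} to $\widetilde\omega$ itself, namely $\widetilde\omega_0 = (1+t)\widetilde\omega + t\Ric(\widetilde\omega)$, which forces $|\widetilde\omega_0|_{\widetilde\omega} \leq C(1+t)\tau(t)$ and hence $|\eta|_{\widetilde\omega} \leq C\tau(t)$. The delicate point is then to choose $A$ large enough (still linear in $\tau(t)$) so that the coefficient of $\text{tr}_\omega\widetilde\omega$ in the maximum-principle inequality remains strictly positive after absorbing $A\text{tr}_\omega\eta$, guaranteeing that the resulting exponent in \eqref{meq} is linear in $\tau(t)$, as claimed, rather than quadratic.
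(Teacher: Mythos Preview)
Your argument has a genuine gap at exactly the point you flag as ``delicate.'' With $H=\log\tr_{\widetilde\omega}\omega - A\psi$ and $\Delta_\omega\psi = n - \tr_\omega\widetilde\omega - \tr_\omega\eta$, the Laplacian of $H$ contains the term $+A\tr_\omega\eta$. Since $\eta=(\omega_0-\widetilde\omega_0)/(1+t)$ has no sign, the only available bound is $\tr_\omega\eta \ge -\tfrac{C}{1+t}\tr_\omega\widetilde\omega_0 \ge -C\tau\,\tr_\omega\widetilde\omega$ (using $\widetilde\omega_0\le C(1+t)\tau\,\widetilde\omega$, as you note). Hence
\[
\Delta_\omega H \;\ge\; (A - C\tau - C A\tau)\,\tr_\omega\widetilde\omega \;-\; An - 2,
\]
and the coefficient $A(1-C\tau)-C\tau$ is \emph{negative} as soon as $C\tau>1$, for \emph{every} choice of $A$. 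So the maximum principle does not close, and no linear-in-$\tau$ (or indeed any) exponent comes out of this test function. A secondary issue: the Chern--Lu inequality in the form you state applies to $\log\tr_\omega\widetilde\omega$, not $\log\tr_{\widetilde\omega}\omega$; the two traces seem to be conflated in the write-up.

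The paper circumvents both problems with two simple but decisive moves. First, it divides the log-trace by $\tau$, so that $\Delta_\omega(\tau^{-1}\log\tr_\omega\widetilde\omega)\ge -C\tr_\omega\widetilde\omega - C$ with a $\tau$-\emph{independent} coefficient. Second, instead of the symmetric difference $\psi$, it uses the asymmetric combination $\widetilde\varphi - A\varphi$ with $A$ a \emph{fixed} constant chosen so that $A\omega_0>\widetilde\omega_0$; this forces $\tfrac{1}{1+t}(A\tr_\omega\omega_0-\tr_\omega\widetilde\omega_0)\ge 0$ and hence $\Delta_\omega(\widetilde\varphi-A\varphi)\ge \tr_\omega\widetilde\omega - C$ with no bad $\eta$-term at all. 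Adding a fixed multiple of this to $\tau^{-1}\log\tr_\omega\widetilde\omega$ then yields $\tr_\omega\widetilde\omega\le Ce^{C\tau}$ by the maximum principle, and the reverse trace follows from the volume ratio bound. Your eigenvalue argument for the lower bound is fine and equivalent to the paper's.
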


From the metric equivalence \eqref{meq}, we can also obtain Ricci curvature bounds for \eqref{cont}.

\begin{cor}\label{cor1}
In the same setting as in Theorem \ref{thm1}, there exists $C>0$ uniform of $t$ such that 
\begin{equation}\label{Riccor}
 -C \omega(t) \leqslant \Ric(\omega(t)) \leqslant C e^{C \tau(t)} \omega(t). 
\end{equation}
\end{cor}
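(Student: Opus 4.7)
The plan is to combine the metric equivalence \eqref{meq} from Theorem~\ref{thm1} with the algebraic identity obtained by solving \eqref{cont} for the Ricci form, namely
\begin{equation*}
    \Ric(\omega(t)) = \tfrac{1}{t}\omega_0 - \tfrac{1+t}{t}\omega(t), \qquad t > 0.
\end{equation*}
Both sides of \eqref{Riccor} then reduce to comparing the fixed smooth metric $\omega_0$ against $\omega(t)$. Since the solution is smooth up to $t = 0$ and $\tau$ is bounded on $[0,1]$, I would first establish the bounds for $t \geq 1$ and handle the compact range $t \in [0,1]$ by continuity of $\omega(t)$ and $\Ric(\omega(t))$.

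For the lower bound the positive term $\tfrac{1}{t}\omega_0$ is simply discarded:
\begin{equation*}
    \Ric(\omega(t)) \geq -\tfrac{1+t}{t}\,\omega(t) \geq -2\,\omega(t) \qquad (t \geq 1),
\end{equation*}
so Theorem~\ref{thm1} is not even needed on this side; the constant depends only on the choice $t \geq 1$.

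The upper bound is where \eqref{meq} genuinely enters. The two fixed initial metrics $\omega_0$ and $\widetilde{\omega}_0$ are uniformly equivalent on the compact $X$, so it suffices to bound $\widetilde{\omega}_0$ from above by a multiple of $\omega(t)$. Applying the same algebraic identity to $\widetilde{\omega}(t)$ gives
\begin{equation*}
    \widetilde{\omega}_0 = (1+t)\widetilde{\omega}(t) + t\,\Ric(\widetilde{\omega}(t)),
\end{equation*}
and \eqref{a1} together with the pointwise estimate $|\Ric|_{\widetilde{\omega}} \leq C_n\,|\Rm|_{\widetilde{\omega}}$ yields $\widetilde{\omega}_0 \leq C(1 + t\,\tau(t))\,\widetilde{\omega}(t)$. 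Feeding this into the upper half of \eqref{meq} produces $\widetilde{\omega}_0 \leq C'(1 + t\,\tau(t))\,e^{C\tau(t)}\,\omega(t)$; dividing by $t$ and absorbing the polynomial factor $(1+t\,\tau(t))/t$ into the exponential (legitimate for $t \geq 1$ because $\tau \geq 1$) delivers the required $\Ric(\omega(t)) \leq Ce^{C\tau(t)}\,\omega(t)$. There is no genuine analytic obstacle in this step, as Theorem~\ref{thm1} already encodes all of the hard work; the only real care needed is to keep the constants independent of $t$ and to ensure the short-time regime does not spoil the uniformity.
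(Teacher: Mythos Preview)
Your argument is correct and follows essentially the same route as the paper's proof: both obtain the lower bound trivially from the continuity equation, and for the upper bound both compare $\omega_0$ to $\widetilde{\omega}_0$, use the curvature assumption \eqref{a1} to obtain $\widetilde{\omega}_0 \leq C t\,\tau\,\widetilde{\omega}$, and then invoke the metric equivalence \eqref{meq}. The only cosmetic difference is that the paper phrases the intermediate step as coming ``from the scalar curvature bound'' (i.e.\ via the trace identity $\widetilde R = t^{-1}\tr_{\widetilde{\omega}}\widetilde{\omega}_0 - n(1+t)/t$), whereas you write out the Ricci identity for $\widetilde{\omega}$ directly; these are the same observation.
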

    
Therefore, if one can find a solution with uniformly bounded curvature, then any other solution starting from a different initial metric will be equivalent and will have bounded Ricci curvautre. For the curvature tensor, we have the following theorem. 

\begin{thm}\label{thm2}
Suppose that we are in the setup given by \eqref{fmap}. Assume there exists a family of metrics $\widetilde{\omega}$ solving \eqref{cont} and  functions $\tau,\sigma, \rho: [0, \infty)  \rightarrow [1, \infty)$ satisfying the following metric bounds
\begin{equation}\label{a3}
     \sigma(t)^{-1} \widetilde{\omega}(t) \leqslant \omega(t) \leqslant \sigma(t) \widetilde{\omega}(t),
\end{equation}
curvature bounds 
\begin{equation}\label{a4}
        \sup_{X}|Rm(\widetilde{\omega}(t))|_{\widetilde{\omega}(t)} \leqslant \tau(t),
    \end{equation}
and curvature derivative bounds 
\begin{equation}\label{a5}
        \sup_{X}|\widetilde{\nabla} Rm(\widetilde{\omega}(t))|_{\widetilde{\omega}(t)} \leqslant \rho(t).
    \end{equation}
Furthermore, assume that there exists a constant $B>0$, independent of time, such that for all $t \geqslant 1$
\begin{equation}\label{a2}
    \frac{1}{t}\tr_{\widetilde{\omega}(t)} \widetilde{\omega}_0  \geqslant B^{-1} >0.
\end{equation}
Then for any other initial metric $\omega_0$ with solution $\omega(t)$ to \eqref{cont}, there exists $C>0$ uniform of $t$ such that  
\begin{equation}\label{mainineq2}
    \sup_{X} |Rm(\omega(t))|_{\omega(t)} \leqslant C t^2\sigma^4 \tau^{3} \rho^{2}
\end{equation}
for all $t \geqslant 1$.
\end{thm}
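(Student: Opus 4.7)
The plan is to adapt the maximum principle argument of Fong-Zhang \cite{FZ20} from the K\"ahler-Ricci flow setting to the (elliptic) continuity method. The crucial simplification is that equation \eqref{cont} rearranges to the algebraic identity
\begin{equation*}
\Ric(\omega) = \frac{1}{t}\omega_0 - \frac{1+t}{t}\omega,
\end{equation*}
which expresses the Ricci tensor explicitly in terms of $\omega$ and the fixed background $\omega_0$. This identity will be substituted repeatedly into all Bochner-type computations so that Ricci terms never appear as unknowns; in particular, $\nabla \Ric(\omega) = \tfrac{1}{t}\nabla \omega_0$ since $\nabla \omega = 0$ for the Chern connection of $\omega$.

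First I would establish the preliminary trace and connection bounds. A Schwarz-lemma style computation for $\log \tr_\omega \widetilde{\omega}$, using the explicit formula for $\Ric(\omega)$, the metric comparison \eqref{a3}, and the lower bound \eqref{a2} (which plays the role of the missing time-derivative term from the parabolic Schwarz lemma), should provide uniform control on $\tr_\omega \widetilde\omega$ in terms of $\sigma, \tau, t$. Next, a Calabi-type third-order estimate for the difference of Chern connections $\Psi := \nabla - \widetilde{\nabla}$ bounds $|\Psi|_\omega$ in terms of $\sigma, \tau$, and similarly $|\nabla \Psi|_\omega$ in terms of $\sigma, \tau, \rho$. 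These estimates are the mechanism through which the reference bounds \eqref{a4} and \eqref{a5} enter into estimates for $\omega$, and they simultaneously provide the control on $\nabla \omega_0$ and $\nabla^2 \omega_0$ needed to handle the differentiated Ricci terms.

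With these preliminaries in hand, I would apply the Bochner formula together with the second Bianchi identity to $|Rm(\omega)|^2_\omega$. Substituting the explicit expression for $\Ric(\omega)$ and using the preceding bounds, this should yield an inequality of the form
\begin{equation*}
\Delta_\omega |Rm(\omega)|^2_\omega \geq |\nabla Rm(\omega)|^2_\omega - C_1 |Rm(\omega)|^3_\omega - C_2 |Rm(\omega)|^2_\omega - C_3,
\end{equation*}
where $C_1, C_2, C_3$ depend polynomially on $\sigma, \tau, \rho, t$. The dangerous cubic term $|Rm|^3$ must be absorbed using a suitably chosen test function. I would then apply the elliptic maximum principle to a quantity of the form $Q = (A - \tr_\omega \widetilde{\omega}) \, |Rm(\omega)|^2_\omega$ for a constant $A > \sup \tr_\omega \widetilde{\omega}$. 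At an interior maximum of $Q$, the conditions $\nabla Q = 0$ and $\Delta_\omega Q \leqslant 0$, combined with the Schwarz-lemma inequality for $\Delta_\omega \tr_\omega \widetilde{\omega}$ (which produces a gradient term capable of dominating $|Rm|^3$), should force the pointwise bound \eqref{mainineq2}.

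The main obstacle will be the careful bookkeeping needed to obtain the precise polynomial dependence $t^2 \sigma^4 \tau^3 \rho^2$. Every application of the metric comparison \eqref{a3} costs a power of $\sigma$; each appearance of the reference curvature \eqref{a4} in the Schwarz lemma or Calabi identity contributes a power of $\tau$; the use of \eqref{a5} in bounding $|\nabla \Psi|$ yields the factor $\rho^2$; and the factors of $t$ track through the coefficients $(1+t)/t$ and $1/t$ in the Ricci formula, producing the overall $t^2$ after the maximum principle step. Ensuring that the cubic curvature term is cleanly absorbed, and that no hidden $t$-factors sneak in through the Calabi-type estimates for $\nabla^2 \omega_0$, is the technically delicate part of the argument.
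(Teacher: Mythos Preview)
Your overall strategy---replace $\Ric(\omega)$ by $\tfrac{1}{t}\omega_0-\tfrac{1+t}{t}\omega$ in every Bochner/Calabi computation, bound $S=|\Psi|_\omega^2$ with $\Psi=\Gamma-\widetilde{\Gamma}$, then run a maximum principle on a combination of $|\Rm|^2$ with a lower-order quantity---matches the paper. The gap is in your choice of that lower-order quantity. You propose $Q=(A-\tr_\omega\widetilde{\omega})|\Rm(\omega)|_\omega^2$, saying the Schwarz-lemma inequality for $\Delta_\omega\tr_\omega\widetilde{\omega}$ ``produces a gradient term capable of dominating $|\Rm|^3$''. It does not: the positive term in $\Delta_\omega\tr_\omega\widetilde{\omega}$ is $g^{\bar j i}g^{\bar q p}\widetilde g^{\bar b a}\nabla_i\widetilde g_{p\bar b}\nabla_{\bar j}\widetilde g_{a\bar q}$, which is of order $|\Psi|^2$, not $|\Rm(\omega)|^2$. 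Coupled to $|\Rm|^2$ this yields at best a bounded multiple of $|\Rm|^2$, never the quartic term needed to absorb $|\Rm|^3$. The paper instead takes $Q=|\Rm(\omega)|_\omega^2/(A-\widetilde S)$ with $\widetilde S=t^{-2}\sigma^{-4}\tau^{-3}\rho^{-2}S$; the crucial point is that $\Delta_\omega S$ contains $|\overline{\nabla}\Psi|_\omega^2$, and since $\overline{\nabla}_{\bar j}\Psi_{ip}^k=-R_{i\bar j p}{}^k+\widetilde R_{i\bar j p}{}^k$ one has $|\overline{\nabla}\Psi|_\omega^2\geqslant\tfrac12|\Rm(\omega)|_\omega^2-C\sigma^4\tau^2$. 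In the quotient this term appears multiplied by $|\Rm|^2/(A-\widetilde S)^2$, giving the $|\Rm|^4$ term that controls the cubic. So you need $S$, not $\tr_\omega\widetilde{\omega}$, in the test function.

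A second issue: when you differentiate $\Ric(\omega)$ you get $\nabla_i R_p{}^k=\tfrac{1}{t}g^{\bar r k}\nabla_i g^0_{p\bar r}$, which involves the \emph{other} connection difference $\Psi^0:=\Gamma-\Gamma_0$, not $\Psi=\Gamma-\widetilde{\Gamma}$. Your proposal to control ``$\nabla\omega_0$ and $\nabla^2\omega_0$'' through the Calabi estimate for $\Psi$ does not address this, since $\Psi$ and $\Psi^0$ are different tensors. The paper handles this by a separate maximum-principle argument for $S_0=|\Psi^0|_\omega^2$, coupling it with $t^{-1}\tr_\omega\omega_0$; assumption \eqref{a2} is exactly what makes $\Delta_\omega(t^{-1}\tr_\omega\omega_0)$ produce a positive $\sigma^{-1}S_0$ term. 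This intermediate bound $S_0\leqslant Ct^2\sigma^4\tau^3$ then feeds into both the bound for $S$ and the Bochner inequality for $|\Rm|^2$.
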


\begin{rem}\label{rem1}
In the case where \eqref{a2} is a holomorphic submersion, the assumption \eqref{a2} is automatically satisfied. In a note by W. Jian and Y. Zhang, the authors proved that a  ``twisted" scalar curvature converges to $-kod(X)$ away from singular fibers. As a consequence, one can show \eqref{a2} on compact sets away from singular fibers, thus \eqref{a2} holds globally if \eqref{fmap} is a holomorphic submersion. 
\end{rem}

\begin{rem}
The assumption \eqref{a5} is serves as a substitute for Shi's estimate in the K\"ahler-Ricci Flow. To the author's knowledge, there is no such analogous result for the continuity method.
\end{rem}

The above theorems are particularly useful when considering product manifolds. Product manifolds serve as a prototype in understanding how geometry collapses under evolution equations such as the K\"ahler-Ricci flow and \eqref{cont}. In the volume collapsed case, the manifold $X$ is split into a collapsing component, $Y$, and base component, $B$, so that $X = Y \times B$. In such cases, a product metric solution to \eqref{cont} and the K\"ahler-Ricci flow can be easily constructed. On $B$, one endow a K\"ahler-Einstien metric, on $Y$, a Calabi-Yau metric. The behaviour regarding how curvature blows up for this product metric can be directly computed. However, the geometric behaviour is not well understood for  non-product metrics. Theorem \ref{thm1} and Theorem \ref{thm2} allows us to obtain curvature estimates for general solutions to \eqref{cont} on product manifolds.\\

We end the introduction with a brief description of the organization of the paper. In Section \ref{s2}, we recall various result from the literature on the continuity equation \eqref{cont}. In Section \ref{s3}, we prove the metric equivalence result stated in Theorem \ref{thm1}. We begin Section \ref{s4} by obtaining $C^3$ estimates for the solution metric $\omega(t)$ in terms of $\sigma(t), \tau(t),\rho(t)$ - the bounds for $\widetilde{\omega}(t)$. Using this, we can complete the proof of Theorem \ref{thm2}. The proofs are all maximum principle type arguments. Finally, we apply our theorems in product manifolds in Section \ref{s5}.

\subsection*{Acknowledgements}
I would like to thank my advisors, Zhou Zhang and Haotian Wu, for their constant encouragement, support and assistance in navigating through this fascinating field. I would like to thank Yashan Zhang for pointing out the result in Remark \ref{rem1} and for providing useful feedback. I would also like to thank Frederick Fong and Zhenlei Zhang for their helpful comments and interest in this work and the referee for their valuable feedback and suggestions.

\section{Background on the Continuity Equation}\label{s2}
To simplify our notation, we omit the $t$ dependence of the metrics; $\omega, \widetilde{\omega}$, and functions; $\sigma, \tau $ and $\rho$. We first recall some properties of the continuity equation. Let 
\begin{equation}
f: X \rightarrow X_{\text {can }} \subset \mathbb{C P}^{N}
\end{equation}
be the semi-ample fibration induced by the pluricanonical system. The target manifold $X_{can}$ is a $kod(X)$-dimensional projective variety and is called the canonical model of $X$. In this setting one obtains $\chi$, a multiple of the Fubini-Study metric on $\CC \PP^N$, such that its pullback $f^* \chi$ is a smooth semi-positive representation of $-2 \pi c_1(X)$. From a geometric perspective,  $2 \pi c_1(X)$ is the class which  contains the Ricci form of $X$.  Furthermore, we set $\Omega$ to be a smooth volume form on $X$ with 
$ \sqrt{-1} \partial \bar{\partial} \log \Omega=f^{*} \chi. $ The general strategy for studying \eqref{cont} is to use Cohomology to reduce the continuity equation to a Monge-Amp\`{e}re equation. More precisely, for $t \in [0, \infty)$ we define a family of reference metrics given by 
\begin{equation}\label{ref}
    \omega_t := \frac{1}{1+t} \omega_0 + \frac{t}{1+t}f^* \chi.
\end{equation}
This allows us to define $\varphi : X \times [0, \infty) \rightarrow \RR$ through 
\begin{equation}\label{defpot}
     \sqrt{-1}\p \bp \varphi = \omega - \omega_t  . 
\end{equation}
By substituting this expression into equation \eqref{cont}, we obtain the following elliptic Monge-Amp\`ere equation for $\varphi(t)$;
\begin{equation}\label{MA}
    \omega^n  = (1+t)^{-r}e^{\left( \frac{1+t}{t}\right) \varphi} \Omega.
\end{equation}
Here $n = \dim(X)$ and $r = n - kod(X)$, the dimension of a generic collapsing fiber. From the analysis carried out in \cite{ZZ19}, we have the following 
\begin{lem}[Lemma 2.1 in \cite{ZZ19}]\label{lem0}
    Let $\varphi(t)$ be the K\"ahler potential of $\omega(t)$. Then there exists $C>0$ such that 
    \begin{equation}
        |\varphi(t)| \leqslant C, \quad \text{for all }t \in [0,\infty).
    \end{equation}
\end{lem}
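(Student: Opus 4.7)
My plan is to prove the uniform bound on $\varphi$ by analyzing the Monge-Amp\`ere equation \eqref{MA}, treating the upper and lower bounds separately.

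\emph{Upper bound.} I would apply the maximum principle directly. At a spatial maximum $x_0$ of $\varphi(\cdot, t)$, $\sqrt{-1}\p\bp\varphi(x_0) \leqslant 0$ gives $\omega(x_0)^n \leqslant \omega_t(x_0)^n$. Binomially expanding $(1+t)^n \omega_t^n = (\omega_0 + tf^*\chi)^n$ and using that $(f^*\chi)^{n-k} \equiv 0$ whenever $k<r$, since $\chi$ has rank at most $n-r = \dim X_{can}$, the surviving terms yield a pointwise inequality $\omega_t^n \leqslant C(1+t)^{-r}\Omega$. Substituting into \eqref{MA} gives $e^{((1+t)/t)\varphi_{\max}} \leqslant C$, hence $\varphi_{\max} \leqslant C$ uniformly in $t$.

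\emph{Lower bound.} A symmetric argument fails because $\omega_t$ degenerates along the fibration $f$, so $\omega_t^n$ admits no uniform positive pointwise lower bound. My approach is to invoke a Ko\l odziej-type $L^\infty$ estimate for the complex Monge-Amp\`ere equation with respect to the K\"ahler but degenerating reference $\omega_t$. Two ingredients suffice: (i) the upper bound $\varphi\leqslant C$ just proved, which gives $\omega^n \leqslant C(1+t)^{-r}\Omega$ pointwise; and (ii) the cohomological identity $\int_X \omega^n = [\omega_t]^n$, whose leading behavior $[\omega_t]^n \sim c\,(1+t)^{-r}$ follows from the same binomial expansion. After rescaling the equation by $(1+t)^r$, the density of $\omega^n$ against any fixed smooth volume form is uniformly $L^p$-bounded for all $p<\infty$. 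A Ko\l odziej-type $L^\infty$ estimate, formulated uniformly across the family of K\"ahler classes $[\omega_t]$ as they degenerate onto $[f^*\chi]$, then produces $\varphi \geqslant -C$.

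The main obstacle is precisely this degeneration: classical Yau--Aubin $C^0$ estimates rely on a K\"ahler reference whose volume is bounded below away from zero, whereas here the volume $[\omega_t]^n$ itself tends to zero like $(1+t)^{-r}$ and the limit class $[f^*\chi]$ is nef but not big. Securing pluripotential estimates whose constants are controlled uniformly along this degeneration, rather than class-by-class qualitative bounds, is the essential technical point of the argument.
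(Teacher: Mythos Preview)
Your proposal is correct and matches what the paper indicates. The paper does not supply its own proof of this lemma; it simply cites \cite{ZZ19} and remarks that ``the proof follows from the pluripotential theory of degenerate complex Monge--Amp\`ere equation (\cites{DP10,EGZ08}).'' Your outline---an elementary maximum-principle upper bound together with a Ko\l{}odziej-type $L^\infty$ estimate, uniform as the reference class $[\omega_t]$ degenerates to the semi-positive class $[f^*\chi]$---is precisely the content of those references, and your identification of the key technical issue (uniformity of the pluripotential estimate across the degenerating family, which is exactly what \cite{EGZ08} and \cite{DP10} supply) is on point.
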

Although the result in \cite{ZZ19} are for elliptic surfaces,  $n=2$ and $kod(X)=1$, one can check that the arguments carry through for higher dimensions. As mentioned in \cite{ZZ19}, the proof follows from the pluripotential theory of degenerate complex Monge-Amp\`{e}re equation (\cites{DP10,EGZ08}).

Using Lemma \ref{lem0}, we can show the following volume equivalence which will be useful in the next section. 
\begin{lem}
Let $\omega(t)$ and $\widetilde{\omega}(t)$ be solutions to the continuity method starting from different initial metrics.  Then there exists a uniform $C>0$ such that 
\begin{equation}\label{lem2}
  C^{-1}\widetilde\omega^n  \leqslant \omega^n \leqslant C\widetilde\omega^n
\end{equation}
for all $t \geqslant 1$. 
\end{lem}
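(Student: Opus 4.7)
The plan is to directly exploit the Monge-Amp\`ere equation \eqref{MA}, which was derived from the continuity equation using the reference family \eqref{ref}. Since both $\omega(t)$ and $\widetilde\omega(t)$ solve \eqref{cont}, their potentials $\varphi$ and $\widetilde\varphi$ with respect to the \emph{same} reference metric $\omega_t$ both satisfy \eqref{MA}. That is,
\begin{equation*}
\omega^n = (1+t)^{-r} e^{\frac{1+t}{t}\varphi}\,\Omega, \qquad \widetilde\omega^n = (1+t)^{-r} e^{\frac{1+t}{t}\widetilde\varphi}\,\Omega.
\end{equation*}
The key observation is that the prefactor $(1+t)^{-r}$ and the volume form $\Omega$ depend only on the geometry of $X$ and the fibration $f$, not on the initial data, so they cancel in the ratio.

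First I would take the quotient of these two identities to obtain
\begin{equation*}
\frac{\omega^n}{\widetilde\omega^n} = \exp\!\left(\frac{1+t}{t}(\varphi - \widetilde\varphi)\right).
\end{equation*}
Then I would apply Lemma \ref{lem0} to both potentials: there exists a uniform constant $C_0>0$ such that $|\varphi(t)|\leqslant C_0$ and $|\widetilde\varphi(t)|\leqslant C_0$ for all $t\geqslant 0$, hence $|\varphi - \widetilde\varphi|\leqslant 2C_0$. Combined with the elementary bound $\frac{1+t}{t}\leqslant 2$ for $t\geqslant 1$, the exponent is uniformly bounded by $4C_0$ in absolute value, yielding
\begin{equation*}
e^{-4C_0}\,\widetilde\omega^n \leqslant \omega^n \leqslant e^{4C_0}\,\widetilde\omega^n,
\end{equation*}
which is exactly the desired equivalence with $C = e^{4C_0}$.

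Since this is essentially a one-line consequence of Lemma \ref{lem0} and the explicit form of the Monge-Amp\`ere equation \eqref{MA}, there is no real obstacle to overcome. The only mildly delicate point is the restriction to $t\geqslant 1$, which is needed solely to control the coefficient $\frac{1+t}{t}$; for $t$ close to $0$ this factor blows up, which is why the statement is restricted away from the initial time. All the heavy lifting has already been done in establishing Lemma \ref{lem0} via pluripotential theory.
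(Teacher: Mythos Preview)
Your argument is correct and essentially identical to the paper's proof: both take the ratio (equivalently, the difference of logarithms) of the two Monge--Amp\`ere identities and invoke the uniform $C^0$ bound on the potentials from Lemma~\ref{lem0}, together with $\frac{1+t}{t}\leqslant 2$ for $t\geqslant 1$. One minor terminological correction: the potentials $\varphi$ and $\widetilde\varphi$ are not defined with respect to the \emph{same} reference metric, since $\omega_t$ in \eqref{ref} depends on $\omega_0$ while the corresponding $\widetilde\omega_t$ depends on $\widetilde\omega_0$; however, as you correctly observe, the volume form $\Omega$ and the factor $(1+t)^{-r}$ are independent of the initial data, so both potentials satisfy a Monge--Amp\`ere relation of the identical form \eqref{MA}, and the cancellation goes through exactly as you wrote.
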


\begin{proof}
From the Monge-Amp\`{e}re equation \eqref{MA} we have 
$$\frac{1+t}{t} \varphi = \log\left( \frac{(1+t)^r \omega^n}{\Omega} \right) \quad \text{and} \quad \frac{1+t}{t} \widetilde{\varphi} = \log\left( \frac{(1+t)^r \widetilde{\omega}^n}{\Omega} \right).  $$
Combining the two equations above, we have 
$$ 
\frac{1+t}{t} (\varphi - \widetilde{\varphi}) = \log\left(\frac{\omega^n}{\widetilde{\omega}^n} \right).
$$
Then using Lemma \ref{lem0} in the above, we obtain \eqref{lem2}.
\end{proof}

Currently, is not known whether the scalar curvature is uniformly bounded along the continuity method. On compact sets away from singular fibers, a twisted scalar curvature converges to $-kod(X)$ by a result due to Jian and Y. Zhang. This is substantially different from the flow, where it was shown in \cite{J20} that the scalar curvature converges to $-kod(X)$ on compact sets away from singular fibers. Furthermore, global scalar curvature bounds for the K\"ahler-Ricci flow are known, see \cite{ST17}.

If we assume that our initial metric has negative curvature, then one can obtain a bound on scalar curvature. By direct calculation, one can show 
\begin{equation}\label{EvTr1}
    \Delta_{\omega} \tr_{\omega} \omega_0 \geqslant  - g^{\bar{j}i}g^{\bar{l}k}R^0_{i \bar{j} k \bar{l}}  +  \frac{1}{t}|\omega_0|_g^2 -  \frac{1+t}{t}  \tr_{\omega} \omega_0 + \frac{|\partial \tr_{\omega} \omega_0|_g^2}{\tr_{\omega} \omega_0}.
\end{equation}
Suppose that the curvature of the initial metric satisfies 
$$ R_{i\bar{i}j\bar{j}} \leqslant -A <0,$$
for some $C>0$. Then from \eqref{EvTr1}, we have  
$$
  \Delta_{\omega} \tr_{\omega} \omega_0 \geqslant A( \tr_{\omega} \omega_0)^2 - \frac{1+t}{t}  \tr_{\omega} \omega_0.
$$
Then applying a maximum principle argument, one obtains
\begin{equation}
    \tr_{\omega} \omega_0 \leqslant C,
\end{equation}
for $t$ away from $0$. For times close to $0$, we can simply use that $\omega$ is close to $\omega_0$ and thus the above trace quantity is bounded. In fact, this shows that the solution is volume non-collapsed. This reflects the coarse correspondence between Kodaria dimension and curvature; a negative Kodaria dimension corresponds to positive curvature, zero Kodaria dimension corresponds to flatness, and maximum Kodaria dimension (general type) corresponds to negative curvature.

\section{Metric Equivalence}\label{s3}

In this section, we give a proof for Theorem \ref{thm1} and derive Ricci curvature estimates as stated in Corollary \ref{cor1}. 

\begin{proof}[(Proof of Theorem \ref{thm1})]
For convenience of notation, we drop the time dependence for the metrics and denote $C>0$ to be some constant which may change in each line but can be fixed at the end of each proof. We also denote the Hermitian metric associated to $\omega$ by $g$ and apply the same convention for $\tilde{\omega}$ with $\tilde{g}$ and $\omega_0$ with $g_0$. Recall the standard identity 
$$
    \Delta_\omega \tr_{\omega} \widetilde{\omega} = g^{\bar{b}p}g^{\bar{q}a}\widetilde{g}_{p \bar{q}}R_{a\bar{b}} - g^{\bar{j}i}
    g^{\bar{q}p}\widetilde{R}_{i \bar{j}p \bar{q}} + g^{\bar{j}i}g^{\bar{q}p} \widetilde{g}^{ \bar{b}a} \nabla_i \widetilde{g}_{p \bar{b}} \nabla_{\bar{j}} \widetilde{g}_{a \bar{q}}.
$$
The first term can be rewritten in terms of metrics using \eqref{cont} and the second term contains a controlled curvature term due to \eqref{a1}. Then  using the Cauchy-Schawrz inequality, we have 
\begin{equation}\label{eqn1}
    \Delta_\omega \tr_{\omega} \widetilde{\omega} \geqslant - \frac{C}{t} \tr_\omega \omega_0 \tr_\omega \widetilde{\omega} - \frac{1+t}{t} \tr_\omega \widetilde{\omega} - C \tau  (\tr_{\omega} \widetilde{\omega})^2 + g^{\bar{j}i}g^{\bar{q}p} \widetilde{g}^{ \bar{b}a} \nabla_i \widetilde{g}_{p \bar{b}} \nabla_{\bar{j}} \widetilde{g}_{a \bar{q}},
\end{equation}
for all $t \geqslant 1$. Since \eqref{a1} yields a Ricci curvature bound for $\widetilde{\omega}$ which satisfies \eqref{cont}, we have 
$$ \omega_0 \leqslant C \widetilde{\omega}_0 \leqslant C t \tau \widetilde{\omega},$$
for some $C>0$ and for all $t \geqslant 1$. Substituting the above inequality into \eqref{eqn1} yields 
\begin{equation}\label{trev}
 \Delta_\omega \tr_{\omega} \widetilde{\omega} \geqslant  - C \tau (\tr_{\omega} \widetilde{\omega})^2 -C  \tr_{\omega} \widetilde{\omega} +  g^{\bar{j}i}g^{\bar{q}p} \widetilde{g}^{ \bar{b}a} \nabla_i \widetilde{g}_{p \bar{b}} \nabla_{\bar{j}} \widetilde{g}_{a \bar{q}}.
\end{equation}
Then by a standard trick using the Cauchy-Schwarz inequality, we can rewrite \eqref{trev} as 
\begin{equation}
    \Delta_{\omega} \log \tr_{\omega} \widetilde{\omega} \geqslant -C - C \tau \tr_{\omega} \widetilde{\omega}.
\end{equation}
We divide through by $\tau$, noting that $\tau \geqslant 1$, to obtain 
\begin{equation}\label{maxp1}
    \Delta_{\omega}(\tau^{-1} \log \tr_{\omega}\widetilde{\omega}) \geqslant - C\tr_{\omega} \widetilde{\omega} -C. 
\end{equation}
On the other hand, taking the trace of \eqref{defpot} for both $\widetilde{\omega}$  and $\omega$ respectively yields 
\begin{equation}\label{phievoltilde}
    \Delta_{\omega} \widetilde{\varphi} = \tr_{\omega} \widetilde{\omega} - \tr_{\omega} \widetilde{\omega}_t,
\end{equation}
and 
\begin{equation}\label{phievol}
        \Delta_{\omega} \varphi = n - \tr_{\omega}\omega_t,
\end{equation}
respectively. Here we define $\widetilde{\omega}_t$ to be the reference metric similarly defined as in \eqref{ref} but with $\widetilde{\omega}_0$ in place of $\omega_0$. Let $A \geqslant 1$ be large enough so that $A \omega_0 > \widetilde{\omega}_0$. From \eqref{phievoltilde} and \eqref{phievol}, we have 
$$ \Delta_\omega (\widetilde{\varphi} - A \varphi) = \tr_\omega \widetilde{\omega} - An + \frac{1}{1+t} (A \tr_\omega \omega_0 - \tr_\omega \widetilde{\omega}_0) + (A-1) \frac{t}{1+t}\tr_\omega f^* \chi.$$
This gives us the following inequality 
\begin{equation}\label{maxp2}
    \Delta_\omega (\widetilde{\varphi} - A \varphi) \geqslant \tr_\omega \widetilde{\omega} - C.
\end{equation}
Now, for a large enough $B>0$, using \eqref{maxp1} and \eqref{maxp2}, we derive the estimate
\begin{equation*}
     \Delta_{\omega} \left( \tau^{-1} \log \tr_{\omega} \widetilde{\omega} + B(\widetilde{\varphi} - A \varphi \right) ) \geqslant  C \tr_{\omega} \widetilde{\omega} - C. 
\end{equation*}
Then applying a standard maximum principle argument   with the help of the uniform bounds in Lemma \ref{lem0}, we obtain a uniform $C>0$ such that 
\begin{equation}\label{tr1}
    \tr_{\omega} \widetilde{\omega} \leqslant Ce^{C \tau}
\end{equation} 
holds on $X$ for all $t \geqslant 1$. This yields the lower bound in \eqref{meq}. To deduce the upper estimate, we use 
$$ \tr_{\widetilde{\omega}} \omega \leqslant n (\tr_{\omega} \widetilde{\omega})^{n-1}  \frac{\omega^n}{\widetilde{\omega}^n}$$
and Lemma \ref{lem2} to show   
\begin{equation}\label{tr2}
    \tr_{\widetilde{\omega}} \omega \leqslant Ce^{C \tau},
\end{equation} 
for all $t \geqslant 1$. The upper estimate immediately follows from the above inequality. 
\end{proof}

From the metric equivalence in \ref{thm1}, one can immediately obtain Ricci curvature estimates along \eqref{cont}. 

\begin{proof}[(Proof of Corollary \ref{cor1})]
A lower bound for the Ricci curvature always holds for the continuity equation. Indeed,
\begin{equation}
        \Ric(\omega(t)) = \frac{1}{t} \omega_0 - \frac{1+t}{t} \omega(t) \geqslant - 2 \omega(t)
\end{equation}
for all $t \geqslant 1$. To obtain the upper bound, we choose $A>0$ large enough such that $\omega_0 \leqslant A \widetilde{\omega}_0$ and rewrite 
\begin{equation}
\begin{aligned}
            \Ric(\omega(t)) &= \frac{1}{t} \omega_0 - \frac{1+t}{t} \omega(t) \\
            & \leqslant \frac{A}{t} \widetilde{\omega}_0 \\
            & \leqslant C \tau \widetilde{\omega} \\
            & \leqslant C \tau e^{C \tau} \omega. 
\end{aligned}
\end{equation}
In the second last line, we used \eqref{a1} to deduce that $\widetilde{\omega}_0 \leqslant Ct\tau \widetilde{\omega}$ from the scalar curvature bound. The last line follows from \eqref{tr1}. From the Ricci bound, the scalar curvature bounds immediately follow. 
\end{proof}


To conclude this section, we make note of an estimate which will be useful throughout the next section. Suppose we are in the setup of Theorem \ref{thm2}. Taking the trace of \eqref{cont}, the scalar curvature $R(t)$ is given by
\begin{equation}\label{R}
     R(t) = \frac{1}{t} \tr_\omega \omega_0  - n \frac{1+t}{t}.
\end{equation}
An upper bound for scalar curvature implies that there exists $C>0$ such that 
\begin{equation}\label{tr0bdd}
     \tr_{\omega} \omega_0 \leqslant C t \sigma \tau,
\end{equation}
for all $t \geqslant 1$.

\section{Curvature Estimates}\label{s4}

We first obtain Calabi $C^3$-bounds for the metric $\omega$ in terms of the bounds for $\widetilde{\omega}$. This will be crucial when  applying the maximum principle to obtain the curvature bound in Theorem \ref{thm2}. Define the tensor $\Psi_{i \bar{j}}^k := \Gamma_{i \bar{j}}^k - \widetilde{\Gamma}_{i \bar{j}}^k$ and its norm $S = |\Psi|_{\omega(t)}^2$. In local coordinates,
\begin{equation}\label{defS}
    S = g^{\bar{j}i}g^{\bar{l}k}g^{\bar{q}p}\widetilde{\nabla}_i g_{k \bar{q}} \widetilde{\nabla}_{\bar{j}} g_{p \bar{l}}.
\end{equation}
If $S$ and $T$ are tensors, we write $S * T$ for any linear combination of the products of tensors $S$ and $T$ formed by contractions using the metric $g$. 

\begin{lem}\label{lem3}
Let $S$ be defined as above, the norm difference between christoffel symbols for the evolving metric $\omega$ and a given metric $\tilde{\omega}$. We denote their corresponding geometric quantities similarly; $\tilde{\omega}$, $\tilde{g}$, $\widetilde{\Ric}$, $\widetilde{\Rm}$ and so on. Then 
\begin{equation}\label{lem3eqn}
\begin{aligned}
        \Delta_\omega S \geqslant  & -C \sigma \tau S -  C|\Psi * R m(\widetilde{\omega})|_{\omega} \sqrt{S}  -C \left|g^{\bar{b} a} \widetilde{\nabla}_{a} \widetilde{R}_{i \bar{b} p}{ }^{k}\right|_{\omega}\sqrt{S} \\
        & - 2\sigma \tau \sqrt{S_0} \sqrt{S}  +  |\nabla \Psi|_{\omega}^{2}+|\bar{\nabla}\Psi|_{\omega}^{2}.
\end{aligned}
\end{equation}
\end{lem}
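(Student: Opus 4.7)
The approach is a Calabi-style $C^3$ estimate comparing the Chern connections of $\omega$ and $\tilde{\omega}$, with the crucial adaptation that no a priori pointwise bound on $\Rm(\omega)$ or $\Ric(\omega)$ is invoked: every such factor appearing in the expansion will be eliminated, either through a K\"ahler algebraic identity for $\Rm(\omega)$ or through the continuity equation \eqref{cont} for $\Ric(\omega)$. I would fix a point and pass to $\tilde{g}$-normal coordinates, so that $\tilde{\Gamma}^k_{ij}=0$ and $\Psi^k_{ij}=\Gamma^k_{ij}(g)=g^{\bar{l}k}\tilde{\nabla}_i g_{j\bar{l}}$, and then compute $\Delta_\omega S = g^{\bar{j}i}\nabla_i \nabla_{\bar{j}} S$ by distributing the two derivatives across the three inverse metrics and two copies of $\tilde{\nabla}g$ in \eqref{defS}.

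The resulting expansion organises into three groups. The top-order terms produce the positive contribution $|\nabla\Psi|_\omega^2 + |\bar{\nabla}\Psi|_\omega^2$. The commutators $[\nabla_i,\nabla_{\bar{j}}]$ acting on $\tilde{\nabla}g$ produce $\Rm(\omega)*\Psi$ and $\Ric(\omega)*\Psi*\bar{\Psi}$ factors; the K\"ahler identity $R^k{}_{ij\bar{q}}(\omega)-\tilde{R}^k{}_{ij\bar{q}}=-\tilde{\nabla}_{\bar{q}}\Psi^k_{ij}$, obtained by differentiating $\Gamma(g)-\Gamma(\tilde{g})$ in $\tilde{g}$-normal coordinates, converts the unbounded $\Rm(\omega)$ factor into $\Rm(\tilde{\omega})$ plus a $\bar{\nabla}\Psi$ piece that Cauchy--Schwarz absorbs into $|\bar{\nabla}\Psi|_\omega^2$. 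The remaining commutators of $\nabla$ past $\tilde{\nabla}$ contribute the $\Psi*\Rm(\tilde{\omega})$ and $\Psi*\tilde{\nabla}\Rm(\tilde{\omega})$ terms that appear verbatim as the second and third terms on the right-hand side of \eqref{lem3eqn}.

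For the $\Ric(\omega)$ contributions I substitute $R_{i\bar{j}}=\tfrac{1}{t}g_{0,i\bar{j}}-\tfrac{1+t}{t}g_{i\bar{j}}$. The $g$-piece of $\Ric(\omega)*\Psi*\bar{\Psi}$ is bounded by $CS$, and the $g_0$-piece by $C\sigma\tau S$ using $|g_0|_\omega\leq\tr_\omega g_0 \leq Ct\sigma\tau$ from \eqref{tr0bdd}. When $\nabla R(\omega)$ appears, differentiating \eqref{cont} and using $\nabla g=0$ gives $\nabla R = \tfrac{1}{t}\nabla g_0$; decomposing $\nabla g_0 = \tilde{\nabla} g_0 - \Psi*g_0$ yields a $\Psi*\tilde{\nabla}g_0$ cross-term which, after Cauchy--Schwarz and invoking \eqref{tr0bdd} to supply the $\sigma\tau$ factor, becomes the $-2\sigma\tau\sqrt{S_0}\sqrt{S}$ contribution, where $S_0:=|\tilde{\nabla}g_0|_\omega^2$ is the analogue of $S$ for $g_0$; the remaining $\Psi*g_0*\Psi$ piece is again bounded by $\sigma\tau S$. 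Summing all contributions yields \eqref{lem3eqn}.

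The main obstacle is the bookkeeping. The expansion generates many products of the form $\Psi*\Psi*\Rm$ and $\Psi*\nabla R$, and it is essential that every factor of $\Rm(\omega)$ or $\Ric(\omega)$ be eliminated via the K\"ahler identity or \eqref{cont} \emph{before} any Cauchy--Schwarz is applied, so that the good terms $|\nabla\Psi|_\omega^2+|\bar{\nabla}\Psi|_\omega^2$ emerge with coefficient $1$ as demanded by \eqref{lem3eqn}. Tracking how each $g^{-1}$ contraction converts $\tilde{\omega}$-norms to $\omega$-norms (each yielding a factor up to $\sigma$) and how \eqref{tr0bdd} supplies $t\sigma\tau$ on $g_0$ is what pins down the coefficient $\sigma\tau$ on both $S$ and $\sqrt{S_0}\sqrt{S}$.
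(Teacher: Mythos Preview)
Your overall strategy---Calabi's $C^3$ computation, elimination of $\Ric(\omega)$ through the continuity equation \eqref{cont}, and the trace bound \eqref{tr0bdd}---matches the paper's proof. Two technical choices differ, however, and in both cases the paper's route is cleaner and avoids the bookkeeping you flag as the ``main obstacle''.

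First, the paper never sees a raw $\Rm(\omega)*\Psi$ term that must be converted and then absorbed by Cauchy--Schwarz. Instead it invokes the standard identity
\[
\Delta_\omega \Psi^{k}_{ip} \;=\; g^{\bar b a}\nabla_a\bigl(\tilde R^{k}{}_{i\bar b p}-R^{k}{}_{i\bar b p}\bigr)
\;=\; \nabla^{\bar b}\tilde R^{k}{}_{i\bar b p} - \nabla_i R^{k}{}_{p},
\]
the second equality using the K\"ahler Bianchi identity to contract the full curvature of $\omega$ down to Ricci. Thus only $\tilde{\Rm}$ and $\nabla\Ric(\omega)$ survive, and the good terms $|\nabla\Psi|^2_\omega+|\bar\nabla\Psi|^2_\omega$ emerge with coefficient exactly $1$ without any absorption step. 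Your description of ``commutators producing $\Rm(\omega)*\Psi$'' and then Cauchy--Schwarz is unnecessary here and, if carried out literally, would risk degrading that coefficient.

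Second, your $S_0:=|\tilde\nabla g_0|^2_\omega$ is not the paper's $S_0$. The paper handles $\nabla_i R^{k}{}_p=\tfrac1t g^{\bar r k}\nabla_i g^0_{p\bar r}$ by subtracting the connection of $g_0$ rather than of $\tilde g$:
\[
\nabla_i g^0_{p\bar l} = (\nabla_i-\nabla^0_i)g^0_{p\bar l} = -\,\mathring\Psi^{n}_{ip}\,g^0_{n\bar l},\qquad S_0:=|\mathring\Psi|^2_\omega,\quad \mathring\Psi:=\Gamma(g)-\Gamma(g_0).
\]
This yields a \emph{single} term and, after \eqref{tr0bdd}, exactly the factor $\sigma\tau\sqrt{S_0}\sqrt S$ in \eqref{lem3eqn}. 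Your decomposition $\nabla g_0=\tilde\nabla g_0-\Psi*g_0$ is correct but produces a different auxiliary quantity; the lemma as stated (and its later use in Proposition~\ref{Sbdd}, where a separate maximum-principle bound on $S_0$ is proved) refers to the $\mathring\Psi$ version.
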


\begin{proof}
The laplacian of $S$ is given by 
\begin{equation}\label{Sexp}
\begin{aligned}
\Delta_{\omega} S=& R^{\bar{j} i} g^{\bar{q} p} g_{k \bar{l}} \Psi_{i p}^{k} \overline{\Psi_{j q}^{l}}+g^{\bar{j} i} R^{\bar{q} p} g_{k \bar{l}} \Psi_{i p}^{k} \overline{\Psi_{j q}^{l}}-g^{\bar{j} i} g^{\bar{q} p} R_{k \bar{l}} \Psi_{i p}^{k} \overline{\Psi_{j q}^{l}}\\
& + 2 \operatorname{Re}\left(g^{\bar{j} i} g^{\bar{q} p} g_{k \bar{l}}\left(\Delta_{\omega} \Psi_{i p}^{k}\right) \overline{\Psi_{j q}^{l}}\right) \\
&+|\nabla \Psi|_{\omega}^{2}+|\bar{\nabla} \Psi|_{\omega}^{2} \\
& =:  I + II +  |\nabla \Psi|_{\omega}^{2}+|\bar{\nabla}\Psi|_{\omega}^{2}. 
\end{aligned}
\end{equation}
Here, we denote the terms in the first line of \eqref{Sexp} by $I$ and second line of \eqref{Sexp} by $II$. In the K\"ahler-Ricci flow, the Ricci terms in $I$ cancel with the time evolution. For the continuity method, we instead use \eqref{cont} to rewrite the Ricci terms in terms of $\omega$ and $\omega_0$ for $t \geqslant 1$;
\begin{equation}
    \begin{aligned}
        I = \frac{1}{t}\left(g_0^{\bar{j}i} g^{\bar{q}p} g_{k \bar{l}} + g^{\bar{j}i}g^{\bar{q}p}_0 g_{k \bar{l}}- g^{\bar{j}i}g^{\bar{q}p}g_{k\bar{l}}^0 \right) \Psi^k_{ip} \overline{\Psi_{jq}^l} - \frac{1+t}{t}S.
    \end{aligned}
\end{equation}
We apply local normal coordinates which diagonalizes $\omega_0$ with respect to $\omega$. Let $\lambda^0_i$ be the principle eigenvalues of $g_0$. Using the upper trace bound \eqref{tr0bdd} in local coordinates, $ \lambda_i^0 \leqslant C t \sigma \tau$,  we obtain the estimate   
\begin{equation}\label{IIineq}
    \begin{aligned}
    I &= \frac{2}{t \lambda_i^0} \Psi_{ip}^k \overline{\Psi_{ip}^k} - \frac{\lambda^0_i}{t} \Psi_{ip}^k \overline{\Psi_{ip}^k} - \frac{1+t}{t} S\\
    & \geqslant \frac{2}{C \sigma \tau t^2} S - C\sigma \tau S - \frac{1+t}{t}S \\
    &\geqslant -C \sigma \tau S 
\end{aligned}
\end{equation}
for all $t \geqslant 1$. Moving onto the terms in $II$, we can use  
\begin{equation}
\Delta_{\omega} \Psi_{i p}^{k}=\nabla^{\bar{b}} \widetilde{R}_{i \bar{b} p}^{k}-\nabla_{i} R_{p}^{k},
\end{equation}
to rewrite $II$ as 
\begin{equation}\label{Iexpr}
    II = 2 \operatorname{Re}\left(g^{\bar{j} i} g^{\bar{q} p} g_{k \bar{l}}\left(\nabla^{\bar{b}} \widetilde{R}_{i \bar{b} p}^{k} \right) \overline{\Psi_{j q}^{l}}\right) -
2 \operatorname{Re}\left(g^{\bar{j} i} g^{\bar{q} p} g_{k \bar{l}}\left(\nabla_{i} R_{p}^{k}\right) \overline{\Psi_{j q}^{l}}\right).
\end{equation}
To treat the first term in \eqref{Iexpr}, we can use 
\begin{equation}\label{orange}
    \begin{aligned}
    \nabla^{\bar{b}} \widetilde{R}_{i \bar{b} p}^{k} &=g^{\bar{b} a} \nabla_{a} \widetilde{R}_{i \bar{b} p}^{k} \\
    &=g^{\bar{b} a}\left(\nabla_{a}-\widetilde{\nabla}_{a}\right) \widetilde{R}_{i \bar{b} p}^{k}+g^{\bar{b} a} \widetilde{\nabla}_{a} \widetilde{R}_{i \bar{b} p}^{k} \\
    &=\Psi * R m(\widetilde{\omega})+g^{\bar{b} a} \widetilde{\nabla}_{a} \widetilde{R}_{i \bar{b} p}^{k}.
    \end{aligned}   
\end{equation}
For the second term in \eqref{Iexpr}, we once again use the continuity equation \eqref{cont} to obtain  
$$R_p^k = \frac{1}{t}g^{\bar{r}k}g^0_{p \bar{r}} -  \frac{1+t}{t} g^{\bar{r}k}g_{p \bar{r}}. $$
Here, $g_0$ is the metric associated with the form $\omega_0$.  Taking the covariant derivative and using metric compatibility yields
$$\nabla_i R_p^k =  \frac{1}{t}g^{\bar{r}k} \nabla_i g^0_{p \bar{r}}. $$
Hence, we can rewrite said second term of $II$ as
\begin{equation}\label{note2}
\begin{aligned}
    -2 \operatorname{Re}\left(g^{\bar{j} i} g^{\bar{q} p} g_{k \bar{l}}\left(\nabla_{i} R_{p}^{k}\right) \overline{\Psi_{j q}^{l}}\right) &=  -\frac{2}{t} \operatorname{Re}\left(g^{\bar{j} i} g^{\bar{q} p}   \left(\nabla_i g^0_{p \bar{l}} \right) \overline{\Psi_{j q}^{l}}\right) \\
    & = -\frac{2}{t} \operatorname{Re}\left(g^{\bar{j} i} g^{\bar{q} p} \left( (\nabla_i - \nabla_i^0) g^0_{p \bar{l}}\right) \overline{\Psi_{j q}^{l}}\right) \\ 
    & =  \frac{2}{t} \operatorname{Re}\left(g^{\bar{j} i} g^{\bar{q} p} g_{n \bar{l}}^0 \mathring{\Psi}_{ip}^{n}  \overline{\Psi_{j q}^{l}}\right) \\
    & \geqslant - 2\sigma \tau \sqrt{S_0} \sqrt{S}
\end{aligned}
\end{equation}
where $\Psi^0$  and $S_0$ are defined by 
$$(\Psi^0)_{i \bar{j}}^k := \Gamma_{i \bar{j}}^k - (\Gamma_0)_{i \bar{j}}^k \text{ and } S_0 = |\mathring{\Psi}|_{\omega}^2.$$

Using Cauchy Schwarz inequality and the above, we can estimate $II$ from below by 
\begin{equation}\label{IIfin}
        II \geqslant -C|\Psi * R m(\widetilde{\omega})|_{\omega} \sqrt{S}  -C \left|g^{\bar{b} a} \widetilde{\nabla}_{a} \widetilde{R}_{i \bar{b} p}{ }^{k}\right|_{\omega} \sqrt{S} - 2\sigma \tau \sqrt{S_0} \sqrt{S}.
\end{equation}
Thus, combining \eqref{Sexp}, \eqref{IIineq} and \eqref{IIfin}, we have \eqref{lem3eqn}.
\end{proof}

Using Lemma \ref{lem3}, we obtain a bound for $S$.
\begin{prop}\label{Sbdd}
Suppose we are in the setting of Theorem \ref{thm2} and let $S$ be defined as in \eqref{defS}. There exists $C>0$ such that
\begin{equation}\label{eqnSbdd}
    S \leqslant C t^2 \sigma^4 \tau^3 \rho^2,
\end{equation}
for all $t \geqslant 1$. 
\end{prop}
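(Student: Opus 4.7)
The plan is to bound $S$ via a maximum-principle argument applied to an auxiliary function combining $S$ with the trace quantity $\tr_\omega \widetilde{\omega}$, starting from the Laplacian estimate in Lemma~\ref{lem3}. First I would simplify each negative term in \eqref{lem3eqn} using the metric equivalence \eqref{a3} and the curvature bounds \eqref{a4}, \eqref{a5}. A straightforward index count shows $|Rm(\widetilde{\omega})|_\omega \leqslant C\sigma^2\tau$ and $|\widetilde{\nabla}Rm(\widetilde{\omega})|_\omega \leqslant C\sigma^{5/2}\rho$, so the first curvature term is bounded by $|\Psi|_\omega |Rm(\widetilde{\omega})|_\omega \sqrt{S} \leqslant C\sigma^2\tau\, S$, while Young's inequality handles the derivative-of-curvature term as $C|\widetilde{\nabla}Rm|_\omega\sqrt{S}\leqslant \varepsilon S + C\varepsilon^{-1}\sigma^5\rho^2$.

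Next I would address the mixed term $-2\sigma\tau\sqrt{S_0}\sqrt{S}$ using the decomposition $\mathring{\Psi} = \Psi + (\widetilde{\Gamma}-\Gamma_0)$, which gives $S_0 \leqslant 2S + 2|\widetilde{\Gamma}-\Gamma_0|^2_\omega$. The background difference $|\widetilde{\Gamma}-\Gamma_0|^2_\omega$ is a geometric quantity depending only on the fixed metric $\omega_0$ and the family $\widetilde{\omega}$; its size can be controlled through the curvature assumptions for $\widetilde{\omega}$, the trace bound \eqref{tr0bdd}, and assumption \eqref{a2}, which enters to fix the relative scaling between $\widetilde{\omega}_0$ and $\widetilde{\omega}$ (and thus between $\omega_0$ and the evolving geometry). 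Young's inequality then absorbs the $\sqrt{S}$ factor at the cost of a $\sigma^2\tau^2$-coefficient term.

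To close the argument, I would work with the auxiliary function $Q = S + K\tr_\omega\widetilde{\omega}$ for a constant $K$ chosen sufficiently large in terms of $\sigma$ and $\tau$. The key point is that the Laplacian estimate \eqref{trev} from Section~\ref{s3} contains the positive summand $g^{\bar{j}i}g^{\bar{q}p}\widetilde{g}^{\bar{b}a}\nabla_i\widetilde{g}_{p\bar{b}}\nabla_{\bar{j}}\widetilde{g}_{a\bar{q}}$, which, upon rewriting using $\widetilde{\nabla}g = \Psi*g$, is comparable to a multiple of $S$ up to $\sigma$-factors. Choosing $K$ large enough lets $K\Delta_\omega\tr_\omega\widetilde{\omega}$ deliver a positive $S$-contribution dominating the negative terms identified above. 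At a maximum point of $Q$ on the compact manifold $X$, the maximum principle $\Delta_\omega Q \leqslant 0$ produces a bound on $S$, and tracking all powers of $\sigma,\tau,\rho$ together with the time factor from \eqref{tr0bdd} (applied twice, to the curvature term inside $I$ and to the $S_0$ term inside $II$) yields $S \leqslant Ct^2\sigma^4\tau^3\rho^2$.

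The main obstacle will be the mixed term $\sigma\tau\sqrt{S_0}\sqrt{S}$: because $\omega_0$ is the fixed initial metric with no intrinsic $t$-dependent comparison to $\widetilde{\omega}$, converting this term into a form absorbable by the $S$-contribution from $\Delta_\omega\tr_\omega\widetilde{\omega}$ requires a delicate choice of the auxiliary constant $K$ against the exponents of $\sigma$ and $\tau$. This is precisely where the sharp exponents in $t^2\sigma^4\tau^3\rho^2$ emerge, and where both \eqref{a2} and \eqref{tr0bdd} play essential roles.
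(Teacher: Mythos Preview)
Your overall architecture is the same as the paper's: feed the assumptions \eqref{a3}--\eqref{a5} into Lemma~\ref{lem3} to simplify the curvature terms, then add a large multiple of a trace quantity whose Laplacian produces a positive $S$-term, and apply the maximum principle. Your estimates $|\Rm(\widetilde\omega)|_\omega\leqslant C\sigma^2\tau$ and $|g^{\bar b a}\widetilde\nabla_a\widetilde R_{i\bar b p}{}^k|_\omega\leqslant C\sigma^{5/2}\rho$ are exactly what the paper uses, and your observation that the gradient term in \eqref{trev} is bounded below by a multiple of $S$ (up to $\sigma$-factors) is also the paper's mechanism.

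The genuine gap is in your treatment of the term $-2\sigma\tau\sqrt{S_0}\sqrt{S}$. Your decomposition $\mathring\Psi=\Psi+(\widetilde\Gamma-\Gamma_0)$ is correct and gives $S_0\leqslant 2S+2|\widetilde\Gamma-\Gamma_0|_\omega^2$, but the assertion that $|\widetilde\Gamma-\Gamma_0|_\omega^2$ ``can be controlled through the curvature assumptions for $\widetilde\omega$, the trace bound \eqref{tr0bdd}, and assumption \eqref{a2}'' is not justified. This quantity is a Calabi $C^3$-norm of $\widetilde\omega$ against the fixed metric $\omega_0$; pointwise bounds on $|\Rm(\widetilde\omega)|_{\widetilde\omega}$ and $|\widetilde\nabla\Rm(\widetilde\omega)|_{\widetilde\omega}$ do not by themselves control first derivatives of $\widetilde g$ relative to a fixed background. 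There is no Shi-type integration-in-time here (this is exactly the deficit flagged in the remark following Theorem~\ref{thm2}), and \eqref{a2} is only a lower bound on a trace, not a two-sided eigenvalue comparison, so you cannot invert $\widetilde g_0$ against $\widetilde g$ to extract such a bound directly.

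The paper resolves this differently: it does \emph{not} reduce $S_0$ to $S$ plus a background piece. Instead it proves, as a separate claim, the a~priori estimate $S_0\leqslant Ct^2\sigma^4\tau^3$ by running an independent maximum-principle argument for the pair $(S_0,\,t^{-1}\tr_\omega\omega_0)$, using \eqref{lem3eqn} with $\omega_0$ in place of $\widetilde\omega$. Assumption \eqref{a2} enters precisely here: it forces the eigenvalues of $t^{-1}\omega_0$ relative to $\omega$ to be bounded below by $C^{-1}\sigma^{-1}$, so that the gradient term in $\Delta_\omega(t^{-1}\tr_\omega\omega_0)$ yields a positive $C^{-1}\sigma^{-1}S_0$ contribution. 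Only after this separate bound is in hand is the $\sqrt{S_0}\sqrt{S}$ term in \eqref{lem3eqn} converted into an inhomogeneous term of the right order, and the main maximum principle for $S$ run with the auxiliary function $\sigma^{-4}\tau^{-1}S+A\sigma^{-1}\tr_\omega\widetilde\omega$. If you want to salvage your route, you would in effect have to carry out the same argument for $|\widetilde\Gamma-\Gamma_0|^2$, so nothing is saved.
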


\begin{proof}
We begin by observing that the last term in \eqref{trev} can be estimated by 
\begin{equation}
\begin{aligned}
g^{\bar{j} i} g^{\bar{q} p} \widetilde{g}^{\bar{b} a} \nabla_{i} \widetilde{g}_{p \bar{b}} \nabla_{\bar{j}} \widetilde{g}_{a \bar{q}} &=g^{\bar{j} i} g^{\bar{q} p} \widetilde{g}^{\bar{b} a}\left(\nabla_{i}-\widetilde{\nabla}_{i}\right) \widetilde{g}_{p \bar{b}}\left(\nabla_{\bar{j}}-\widetilde{\nabla}_{\bar{j}}\right) \widetilde{g}_{a \bar{q}} \\
&=g^{\bar{j} i} g^{\bar{q} p} \widetilde{g}^{\bar{b} a}\left(-\Psi_{i p}^{d}\right) \widetilde{g}_{d \bar{b}}\left(-\overline{\Psi_{j q}^{e}}\right) \widetilde{g}_{a \bar{e}} \\
& \geqslant \sigma^{-1} S.
\end{aligned}
\end{equation}
By assumption \eqref{a3}, the remaining terms in \eqref{trev} are controlled. Therefore, 
\begin{equation}\label{trwtildew}
    \Delta_{\omega}(\sigma^{-1} \tr_{\omega} \widetilde{\omega}) \geqslant -C - C\tau \sigma +\sigma^{-2}S.
\end{equation}
On the other hand, \eqref{a4}, \eqref{a5} and  \eqref{a2}, gives us 
\begin{equation}\label{note1}
\begin{aligned}
|\Psi * R m(\widetilde{\omega})|_{\omega} & \leqslant |\Psi|_{\omega}|R m(\widetilde{\omega})|_{\omega} \leqslant C \sigma^{2} \tau|\Psi|_{\omega} \quad \text{and} \\
\left|g^{\bar{b} a} \widetilde{\nabla}_{a} \widetilde{R}_{i \bar{b} p}{ }^{k}\right|_{\omega} & \leqslant \sigma\left|\widetilde{\nabla}^{\bar{b}} \widetilde{R}_{i \bar{b} p}{ }^{k}\right|_{\omega} \leqslant \sigma^{\frac{5}{2}}\left|\widetilde{\nabla}^{\bar{b}} \widetilde{R}_{i \bar{b} p}{ }^{k}\right|_{\widetilde{\omega}} \leqslant C \sigma^{\frac{5}{2}} \rho.\\
\end{aligned}
\end{equation}
Thus, we are left to bound the $S_0$ term in \eqref{lem3eqn}. To this end, we make the following claim.\\

\textbf{Claim:} There exists a uniform $C>0$ such that 
\begin{equation}\label{bddS0}
    S_0 \leqslant C t^2 \sigma^4 \tau^3 
\end{equation}
for all $t \geqslant 1$. \\

We assume this for now and proceed with the proof of Proposition \ref{Sbdd}.  Combining  \eqref{note1} and \eqref{bddS0} with \eqref{lem3eqn}, we obtain 
$$
    \Delta_\omega  S \geqslant -C \sigma \tau S -  C\sigma^2 \tau S  -C \sigma^\frac{5}{2} \rho \sqrt{S} 
    - 2Ct \sigma^3 \tau^\frac{5}{2} \sqrt{S} +|\nabla \Psi|_{\omega}^{2}+|\bar{\nabla} \Psi|_{\omega}^{2}
$$
for all $t \geqslant 1$. After collecting terms by their power of $S$, using that $\sigma, \tau,\rho, t \geqslant 1$, and discarding the norms of $\nabla \Psi$, the above inequality simplifies to 
$$
    \Delta_\omega S \geqslant -C\sigma^2 \tau S - C t \sigma^{3}  \tau^\frac{5}{2}\rho \sqrt{S}.
$$
In order to apply the maximum principle, we rewrite this as
\begin{equation}\label{Sevol}
    \Delta_\omega (\sigma^{-4} \tau^{-1} S) \geqslant  -C\sigma^{-2} S - C t \sigma^{-1} \tau^{\frac{3}{2}} \rho \sqrt{S}.
\end{equation}
Let $Q := \sigma^{-4} \tau^{-1} S + A \sigma^{-1} \tr_{\omega} \widetilde{\omega}$ for some arbitrary $A>0$. From \eqref{trwtildew} and \eqref{Sevol}, we have 
\begin{equation}
    \Delta_\omega Q \geqslant  (A- C) \sigma^{-2}S - Ct \sigma^{-1} \tau^{\frac{3}{2}} \rho \sqrt{S} - AC - A \tau \sigma,
\end{equation}
for some constant $C>0$ and all $t \geqslant 1$. We can always choose $A \geqslant 2C$  large enough to force a sign on the coefficients of $S$. Moreover, we can assume that at the maximum of point of $Q$,  
$$  t \sigma^{- 1} \tau^{\frac{3}{2}} \rho \sqrt{S} \leqslant \frac{1}{2}\sigma^{-2} S.$$
Otherwise, we would have $ S \leqslant C t^2 \sigma^2 \tau^3 \rho^2$ at this point, which would lead to \eqref{eqnSbdd}. Putting all this together, we obtain a lower estimate at the maximum of $Q$:
\begin{equation}
        \Delta_\omega Q \geqslant \frac{C}{2} \sigma^{-2}S - A\tau \sigma - AC.
\end{equation}
Then, by the maximum principle, we obtain the estimate
\begin{equation}
    S \leqslant C \sigma^4 \tau \leqslant C t^2 \sigma^4 \tau^3 \rho^2
\end{equation}
on all of $X$ and for all $t \geqslant 1$.

We now prove the claim. From a similar calculation leading to \eqref{trev}, we can find some $C>0$ such that 
\begin{equation}\label{eqnevoltr0}
     \Delta_\omega \left( \frac{1}{t} \tr_{\omega} \omega_0 \right) \geqslant - \frac{C}{t} -C \sigma \tau  -C t \sigma^2 \tau^2 + \frac{1}{t} g^{\bar{j}i} g^{\bar{q}p}g_{a \bar{b}}^0 \mathring{\Psi}_{ip}^d \overline{\mathring{\Psi}_{jq}^e},
\end{equation}
for all $t \geqslant 1$. To estimate the last term, we use \eqref{a3} and \eqref{a2} to obtain 
$$t^{-1} \tr_{\omega} \omega_0 \geqslant \sigma^{-1} t^{-1} \tr_{\widetilde{\omega}} \widetilde{\omega}_0 > C^{-1}\sigma^{-1}. $$
Applying normal coordinates for which $\omega_0$ is diagonalised with respect to $\omega$, we obtain  
\begin{equation}
        \frac{1}{t} g^{\bar{j}i} g^{\bar{q}p}g_{a \bar{b}}^0 \mathring{\Psi}_{ip}^d \overline{\mathring{\Psi}_{jq}^e} = \frac{1}{t}\lambda_a^0 S_0 \geqslant C^{-1} \sigma^{-1} S_0.
\end{equation}
As a consequence, we have for some $C>0$, 
\begin{equation}\label{trw0evol}
    \Delta_\omega \left( t^{-1} \tr_{\omega} \omega_0 \right) \geqslant - C t^{-1} -C \sigma \tau  -C t \sigma^2 \tau^2 + C^{-1} \sigma^{-1} S_0,
\end{equation}
for all $t \geqslant 1$. We can also compute a lower bound for $\Delta_\omega S_0$ using Lemma \ref{lem3}. Here we have $\mathring{\Psi}$ instead of $\Psi$ and $\Rm^0$ in place of $\widetilde{Rm}$. Moreover, by \eqref{tr0bdd} we have 
$$ \begin{aligned}
|\mathring{\Psi} * R m(\omega_0)|_{\omega} & \leqslant |\mathring{\Psi}|_{\omega}|R m(\omega_0)|_{\omega} \leqslant C t^2 \sigma^{2} \tau^2 |\mathring{\Psi}|_{\omega} \quad \text{and} \\
\left|g^{\bar{b} a} \mathring{\nabla}_{a} \mathring{R}_{i \bar{b} p}{ }^{k}\right|_{\omega} & \leqslant t \sigma \tau \left|\mathring{\nabla}^{\bar{b}} \mathring{R}_{i \bar{b} p}{ }^{k}\right|_{\omega} \leqslant  (t \sigma \tau)^2\left|\mathring{\nabla}^{\bar{b}} \mathring{R}_{i \bar{b} p}{ }^{k}\right|_{\omega_0} \leqslant C t^2 \sigma^2 \tau^2.\\
\end{aligned} $$
Therefore, for some $C>0$, we have  
\begin{equation}\label{S0evol}
    \Delta_\omega S_0 \geqslant - C t^2 \sigma^2 \tau^2 S_0 -C t^2 \sigma^2 \tau^2 \sqrt{S_0} + |\nabla \mathring{\Psi}|^2_\omega + |\overline{\nabla} \mathring{\Psi}|_\omega^2,
\end{equation}
for all $t \geqslant 1$. For some $A>0$, we set 
$$Q:= t^{-2} \sigma^{-4} \tau^{-3} S_0 + At^{-1} \sigma^{-1} \tau^{-1} \tr_\omega \omega_0. $$
We can then combine \eqref{trw0evol} and \eqref{S0evol} to estimate the Laplacian of Q from below. Then choosing $A$ large enough, we can force a sign on the coefficient of $S_0$ to obtain 
\begin{equation}
    \Delta_\omega Q \geqslant C \sigma^{-2} \tau^{-1} S_0 - C \sigma^{-2} \tau^{-1} \sqrt{S_0}- Ct \sigma \tau, 
\end{equation}
for some $C>0$ and for all $t \geqslant 1$. We assume that $S_0 \geqslant 1$ and apply the maximum principle to show that at a maximum of $Q$,
$$ S_0 \leqslant Ct \sigma^3 \tau^2 .$$
This bound and the uniform bound of $t^{-1} \sigma^{-1} \tau^{-1} \tr_\omega \omega_0$ establishes \eqref{bddS0}. 
\end{proof}

With the proof of Proposition \ref{Sbdd} completed, we can proceed to prove Theorem \ref{thm2}. 

\begin{proof}[Proof of Theorem \ref{thm2}]

From Lemma 3.2.10 in \cite{SW13}, we have 
\begin{equation}
\Delta_{\omega}  \Rm = \nabla_{\bar{l}} \nabla_{k} R_{i \bar{j}}+  \mathrm{Rm} * \mathrm{Rm}+\mathrm{Rc} * \mathrm{Rm}. 
\end{equation}
Once again, we use \eqref{cont} to rewrite the Ricci term:
\begin{equation}
    \nabla_{\bar{l}} \nabla_{k} R_{i \bar{j}} = \frac{1}{t} \nabla_{\bar{l}} \nabla_{k}g_{i \bar{j}}^0 - \frac{1+t}{t} \nabla_{\bar{l}} \nabla_{k} g_{i \bar{j}}.
\end{equation}
The second term is zero by metric compatibility. The first term, expressed in normal coordinates with respect to $g$, can be expressed as 
$$
\begin{aligned}
    \frac{1}{t} \nabla_{\bar{l}} \nabla_{k} g_{i \bar{j}}^0 &= \frac{1}{t} \p_{\bar{l}} \p_k g_{i \bar{j}}^0 - \frac{1}{t} R_{\overline{j}i \overline{l}}^{\overline{m}} g_{k \bar{m}}^0, \\ 
    & = - \frac{1}{t} R_{k \bar{l} i \bar{j}}^0 + \frac{1}{t} g_0^{\bar{q}p}(\p_k g_{i \bar{q}}^0)(\p_{\bar{l}}g_{p \bar{j}}^0)-  \frac{1}{t} R_{\overline{j}i \overline{l}}^{\overline{m}} g_{k \bar{m}}^0, \\
    &= - \frac{1}{t}  R_{k \bar{l} i \bar{j}}^0 + \frac{1}{t} g_0^{\bar{q}p}(\nabla_k g_{i \bar{q}}^0)(\nabla_{\bar{l}}g_{p \bar{j}}^0)- \frac{1}{t} R_{\overline{j}i \overline{l}}^{\overline{m}} g_{k \bar{m}}^0,\\
    & = - \frac{1}{t}  R_{k \bar{l} i \bar{j}}^0 + \frac{1}{t} g^0_{n\bar{m}}\mathring{\Psi}_{ki}^n \overline{\mathring{\Psi}_{lj}^m} - \frac{1}{t} R_{\overline{j}i \overline{l}}^{\overline{m}} g_{k \bar{m}}^0.
\end{aligned}
$$
Thus, for any $t \geqslant 1$, we have in normal coordinates
\begin{equation}\label{Rmexpr}
    \Delta_\omega R_{i \bar{j}k\bar{l}} = - \frac{1}{t} Rm(\omega_0) + \frac{1}{t}g^0_{n\bar{m}}\mathring{\Psi}_{ki}^n \overline{\mathring{\Psi}_{lj}^m} - \frac{1}{t} R_{\overline{j}i \overline{l}}^{\overline{m}} g_{k \bar{m}}^0+ \Rm * \Rm + \Rm * \Ric.
\end{equation}
Denote $\Phi_{kilj} := t^{-1}g_{n \bar{m}}^0 \mathring{\Psi}_{ki}^n \overline{\mathring{\Psi}_{lj}^m}$. Then using the above and the metric trace bound \eqref{tr0bdd}, we have 
$$
\begin{aligned}
        \Delta_\omega \left(|\Rm(\omega)|_{\omega}^2  \right) \geqslant & - \frac{1}{t}|\Rm(\omega_0)|_{\omega} |\Rm(\omega)|_{\omega} - |\Phi|_{\omega} |\Rm(\omega)|_{\omega} - C \sigma \tau |\Rm(\omega)|_{\omega}^2 \\
        & - C|\Rm(\omega)|_{\omega}^3 + |\nabla \Rm(\omega)|_{\omega}^2 + |\overline{\nabla} \Rm(\omega)|_{\omega}^2.
\end{aligned}
$$
 In normal coordinates with respect to $\omega$, we can deduce an upper bound for $\Phi$:
$$ |\Phi|_{\omega} = t^{-1}|g_{n \bar{m}}^0 \mathring{\Psi}_{ki}^n \overline{\mathring{\Psi}_{lj}^m}|_\omega \leqslant C \sigma  \tau S_0 \leqslant C t^2 \sigma^5 \tau^4.$$
Using \eqref{tr0bdd}, we also have 
$$|\Rm(\omega_0)|_\omega \leqslant C t^2 \sigma^2 \tau^2|\Rm(\omega_0)|_{\omega_0} \leqslant Ct^2 \sigma^2 \tau^2. $$
Thus, taking $t \geqslant 1$, using \eqref{bddS0} and the previous two estimates, we deduce that for some uniform $C>0$, 
\begin{equation}\label{Rm2evol}
\begin{aligned}
        \Delta_\omega \left(|\Rm(\omega)|^2_\omega  \right) \geqslant & -Ct\sigma^2 \tau^2|\Rm(\omega)|_\omega - Ct^2 \sigma^5 \tau^4 |\Rm(\omega)|_\omega - C \sigma \tau |\Rm(\omega)|_\omega^2 \\
        &  - C |\Rm(\omega)|_\omega^3 + |\nabla \Rm(\omega)|_\omega^2 + |\overline{\nabla} \Rm(\omega)|_\omega^2 \\
         \geqslant &  - Ct^2 \sigma^5 \tau^4 |\Rm(\omega)|_\omega  - C \sigma \tau |\Rm(\omega)|_\omega^2  \\
        & - C |\Rm(\omega)|_\omega^3 + |\nabla \Rm(\omega)|_\omega^2 + |\overline{\nabla} \Rm(\omega)|_\omega^2
\end{aligned}
\end{equation}
In the last inequality, we collected terms by their power of curvature using that $\sigma, \tau,\rho, t \geqslant 1$. To simplify our notation, we set  
$$\widetilde{S} = t^{-2}\sigma^{-4}\tau^{-3}\rho^{-2}S, $$
which is uniformly bounded for $t \geqslant 1$ due to \eqref{eqnSbdd}. From the proof in Proposition \eqref{Sbdd}, there exists $C>0$ such that  
\begin{equation}\label{RmSevol}
    \Delta_{\omega}\widetilde{S} \geqslant - C \sigma^2 \tau  + t^{-2} \sigma^{-4} \tau^{-3} \rho^{-2} \left( |\nabla \Psi |_{\omega}^2 + |\overline{\nabla} \Psi |_\omega^2 \right),
\end{equation}
for all $t \geqslant 1$. 

Next, for $A>0$ large enough such that $A - \tilde{S}>1$, we define the quantity  
\begin{equation}
    Q := \frac{|\Rm(\omega)|_\omega^2}{A - \widetilde{S}}.
\end{equation}
 Since $\widetilde{S}$ is uniformly bounded, we can take $A$ large enough so that the $A - \widetilde{S}$ is positive. We will now set up a maximum principle argument to complete the proof for our result. Taking the laplacian and applying Cauchy-Schwarz, we can find some $C>0$ such that  
\begin{equation}
    \begin{aligned}
            \Delta_\omega  Q & \geqslant - C t^2 \sigma^5 \tau^4  \frac{|\Rm(\omega)|_\omega}{A- \widetilde{S}}  - C \sigma \tau \frac{|\Rm(\omega)|_\omega^2}{A- \widetilde{S}} - C \frac{|\Rm(\omega)|_\omega^3}{A - \widetilde{S}} + \frac{|\nabla \Rm(\omega)|_\omega^2}{A -\widetilde{S}} + \frac{|\overline{\nabla} \Rm(\omega)|_\omega^2}{A -\widetilde{S}} \\
            &  - C \sigma^2 \tau \frac{|\Rm(\omega)|_\omega^2}{(A- \widetilde{S})^2} + t^{-2}\sigma^{-4} \tau^{-3} \rho^{-2} (|\nabla \Psi|_\omega^2 + | \overline{\nabla} \Psi|_\omega^2) \frac{|\Rm(\omega)|_\omega^2}{(A- \widetilde{S})^2} \\
            & - 2 \frac{|\p |\Rm(\omega)|_\omega^2|_\omega|\p \widetilde{S}|_\omega}{(A- \widetilde{S})^2}   + 2 \frac{|\Rm(\omega)|_\omega^2|\p \widetilde{S}|_\omega}{(A- \widetilde{S})^3},
    \end{aligned}
\end{equation}
for all $t \geqslant 1$. The goal here is to produce a positive fourth power curvature term to dominate the negative curvature terms. This will come from the third-last term. To deal with the second-last term, we use
$$ |\p |\Rm(\omega)|_\omega^2|_\omega^2 \leqslant 2 |\Rm(\omega)|_\omega^2 (|\nabla \Rm(\omega)|_\omega^2 + |\overline{\nabla} \Rm(\omega)|_\omega^2 )$$
in combination with Young's inequality to obtain 
$$ 2 \frac{|\p |\Rm(\omega)|_\omega^2|_\omega|\p \widetilde{S}|_\omega}{(A- \widetilde{S})^2} \leqslant  4 \frac{|\Rm(\omega)|_\omega^2}{(A- \widetilde{S})^3}|\p \widetilde{S}|^2_\omega + \frac{|\nabla \Rm(\omega)|_\omega^2 + |\overline{\nabla} \Rm(\omega)|_\omega^2}{A - \widetilde{S}}.$$
Furthermore, by \eqref{eqnSbdd} we have
\begin{equation*}
    \begin{aligned}
            |\p \widetilde{S}|_\omega^2 &= t^{-4}\sigma^{-8}\tau^{-6} \rho^{-4}|\p S|_\omega^2 \\ & \leqslant 4t^{-4} \sigma^{-8} \tau^{-6} \rho^{-4}  S \left( |\nabla \Psi|_\omega^2 + |\overline{\nabla}
\Psi |_\omega^2 \right)  \\ 
& \leqslant 4 C t^{-2} \sigma^{-4} \tau^{-3} \rho^{-2} \left( |\nabla \Psi|_\omega^2 + |\overline{\nabla} 
\Psi |_\omega^2 \right).
    \end{aligned}
\end{equation*}
Putting this all together, we derive 
\begin{equation}
    \begin{aligned}
            \Delta_\omega  Q &  \geqslant 
            - C t^2 \sigma^5 \tau^4  \frac{|\Rm(\omega)|_\omega}{A- \widetilde{S}}  - C \sigma^2 \tau \frac{|\Rm(\omega)|_\omega^2}{A- \widetilde{S}} - C \frac{|\Rm(\omega)|_\omega^3}{A - \widetilde{S}} \\
            & + \frac{1}{2}t^{-2}\sigma^{-4} \tau^{-3} \rho^{-2}(|\nabla \Psi|_\omega^2 + |\overline{\nabla} \Psi|_\omega^2) \frac{|\Rm(\omega)|_\omega^2}{(A-\widetilde{S})^2} \\
            & + \frac{1}{2}t^{-2}\sigma^{-4} \tau^{-3} \rho^{-2} (|\nabla \Psi|_\omega^2 + |\overline{\nabla} \Psi|_\omega^2) \frac{|\Rm(\omega)|_\omega^2}{(A-\widetilde{S})^2}\left(1 - \frac{16}{A- \widetilde{S}}\right).
           \end{aligned}
\end{equation}
We can choose $A$ large enough to force a sign on the last term. Then converting the second-last term into quadratic curvature terms via 
\begin{equation}
|\overline{\nabla} \Psi|_{\omega}^{2} \geqslant \frac{1}{2}|\Rm(\omega)|_{\omega}^{2}-C \sigma^4 \tau^2,
\end{equation}
we reach 
\begin{equation}
    \begin{aligned}
            \Delta_\omega Q &\geqslant - Ct^2 \sigma^5 \tau^4 \frac{|\Rm(\omega)|_\omega}{A - \widetilde{S}} \\ 
            & - C ( \sigma^2 \tau + t^{-2}\tau^{-1}\rho^{-2}) \frac{|\Rm(\omega)|_\omega^2}{A - \widetilde{S}} - C \frac{|\Rm(\omega)|_\omega^3}{A - \widetilde{S}} \\ 
            & + \frac{1}{4}t^{-2} \sigma^{-4} \tau^{-3} \rho^{-2}\frac{|\Rm(\omega)|_\omega^4}{(A - \widetilde{S})^2},
    \end{aligned}
\end{equation}
for some uniform $C>0$ and for all $t \geqslant 1$. Observe that at the maximum point of $Q$, we have 
$$ C  ( \sigma^2 \tau + t^{-2}\tau^{-1}\rho^{-2}) \frac{|\Rm(\omega)|_\omega^2}{(A - \widetilde{S})^2} \leqslant \frac{1}{12} t^{-2} \sigma^{-4} \tau^{-3} \rho^{-2}\frac{|\Rm(\omega)|_\omega^4}{(A - \widetilde{S})^2} $$
since otherwise, there would exists $C>0$ such that  $|\Rm(\omega)|_\omega \leqslant C t \sigma^3 \tau^2 \rho$ for all $t \geqslant 1$, leading to a better bound than \eqref{mainineq2}. Similarly, we can assume that at the maximum point of $Q$,
$$ 
    Ct^2\sigma^5 \tau^4 \frac{|\Rm(\omega)|_\omega}{A - \widetilde{S}} \leqslant  \frac{1}{12} t^{-2} \sigma^{-4} \tau^{-3} \rho^{-2} \frac{|\Rm(\omega)|_\omega^4}{(A - \widetilde{S})^2}.
    $$
Otherwise we would obtain $ |\Rm(\omega)|_\omega \leqslant C t^\frac{4}{3}\sigma^{3} \tau^{\frac{7}{3}} \rho^{\frac{2}{3}} $ for all $t \geqslant 1$. Therefore,
\begin{equation}
 \Delta_\omega Q \geqslant - C \frac{|\Rm(\omega)|_\omega^3}{A - \widetilde{S}} + \frac{1}{12} t^{-2} \sigma^{-4} \tau^{-3} \rho^{-2}\frac{|\Rm(\omega)|_\omega^4}{(A - \widetilde{S})^2}.
\end{equation}
At the maximum of $Q$, we have  
$$ 
|\Rm(\omega)|_\omega \leqslant Ct^2 \sigma^4 \tau^3 \rho^2. 
$$
Using the uniform bound for $A- \widetilde{S}$, we obtain \eqref{mainineq2}.
\end{proof}

\section{Applications and Further Remarks}\label{s5}
The usefulness of Theorem \ref{thm2} depends on our ability to construct well-behaved solutions to the continuity equation. Unfortunately, like with the K\"ahler Ricci flow, such solutions are  difficult to construct. In certain cases, however, such as when $X$ is Calabi-Yau or a product manifold, such constructions can be made and thus we can use Theorem \ref{thm2} to understand the continuity method starting from a general initial metric. This was previously done in \cite{Z20} for the K\"ahler-Ricci flow where several results from \cite{TZ15} were reproved. In this section, we study the same setups from these papers but in the context of the continuity method. 

We first recall the following definition. 
\begin{defn}
A long time solution $\omega(t)$ of the continuity method \eqref{cont} on $X$, is of singularity type III  if 
$$
\limsup _{t \rightarrow \infty}\left(\sup _{X}|R m(\omega(t))|_{\omega(t)}\right)<\infty.
$$
Otherwise, we say the solution is of type IIb. 
\end{defn}

\begin{exam}
Suppose that $X$ is a Calabi-Yau Manifold. Thanks to Yau's solution to the Calabi conjecture, we can guarantee the existence of $\omega_{CY}$ on $X$. One can easily check that the solution to the continuity equation starting from $\widetilde{\omega}(0) = \omega_{CY}$ is 
\begin{equation}
    \widetilde{\omega}(t) = \frac{\omega_{CY}}{1+t}. 
\end{equation}
Moreover 
\begin{equation}\label{check1}
    \frac{1}{t} \tr_{\widetilde{\omega}} \widetilde{\omega}_0 =  \frac{1+t}{t}\tr_{\omega_{CY}} \omega_{CY} = \frac{1+t}{t} n \not\rightarrow 0  \text{ as } t \rightarrow \infty.
\end{equation}
Since the curvature re-scales with time as  
\begin{equation}
    |\Rm(\widetilde{\omega}(t))|_{\widetilde{\omega}(t)}^2 = (1+t)^2 |\Rm(\omega_{CY})|_{\omega_{CY}}^2, 
\end{equation}
the solution will be Type III if and only if $\omega_{CY}$ is flat. Furthermore, 
\begin{equation}
    |\nabla_{\widetilde{\omega}(t)}\Rm(\widetilde{\omega}(t))|_{\widetilde{\omega}(t)}^2= (1+t)^2|\nabla_{CY} \Rm(\omega_{CY})|^2_{\omega_{CY}} = 0 \leqslant 1.
\end{equation}
Therefore, for any solution to \eqref{cont} starting from a general metric, we can apply Theorem \ref{thm1} and Theorem \ref{thm2} by $\tau \equiv 1 $ and  $\rho \equiv 1$. Thus, there exists, $C>0$ such that  
\begin{equation}
    |\Rm(\omega(t))|_{\omega(t)} \leqslant Ct^{2},
\end{equation}
for $t \geqslant 1$. 

\end{exam}

\begin{exam}
Let $X$ be a product manifold given by
    $$ X = Y \times Z $$
where $Y$ is a Calabi-Yau manifold and $Z$ is a compact manifold with ample $K_Y$. Define the product metric 
\begin{equation}
    \omega_X = \omega_Y + \omega_Z, 
\end{equation}
where $\omega_Y$ is Calabi-Yau and $\omega_Z$ is a K\"ahler-Einstein metric. We denote the corresponding levi-civita connections by $\nabla_{\omega_Y}$ and $\nabla_{\omega_Z}$ respectively. Starting from $\omega_X$, the family of metrics 
\begin{equation}
    \widetilde{\omega}(t):= \frac{1}{1+t}  \omega_Y + \omega_Z 
\end{equation}
satisfies the continuity equation. Furthermore, the product structure allows us to easily compute the Riemannian Curvature tensor which splits as 
\begin{equation}
    |\Rm(\widetilde{\omega}(t))|_{\widetilde{\omega}(t)}^2 = (1+t)^2 |\Rm(\omega_Y)|_{\omega_Y}^2 + |\Rm (\omega_Z)|_{\omega_Z}^2.
\end{equation}
Therefore, the metric $\omega(t)$ is Type III if and only if $\omega_Y$ is flat. Moreover, 
\begin{equation}
        |\nabla_{\widetilde{\omega}} \Rm(\widetilde{\omega}(t))|_{\widetilde{\omega}(t)}^2 =  |\nabla_{\omega_Z}\Rm (\omega_Z)|_{\omega_Z}^2  \leqslant C.
\end{equation}

Checking our last assumption, we have 
\begin{equation}
    \frac{1}{t} \tr_{\widetilde{\omega}} \omega_0 =  \frac{1+t}{t}\dim(Y) + o(1),
\end{equation}
thus $t^{-1} \tr_{\widetilde{\omega}} \omega_0 \not \rightarrow 0$ as $t \rightarrow \infty$. By Theorem \ref{thm2}, we conclude that any solution of the continuity method on $X$ with $Y$ being a flat tori, there exists $C>0$ such that 
\begin{equation}
    |\Rm(\omega(t))|_{\omega(t)} \leqslant Ct^{2}
\end{equation} 
for all $t \geqslant 1$. 
\end{exam}

\bibliography{main}

\end{document}